\documentclass[10pt]{article}
\usepackage{tikz,amsmath, amssymb,amsthm,color,enumerate,comment,graphicx,float,appendix,enumitem,subcaption,booktabs,arydshln,mathrsfs,hyperref}
\usepackage{xcolor}
\usepackage{algorithm}
\usepackage{algorithmic}
\newcommand{\no}[1]{}  
\renewcommand{\d}{\,\mathrm{d}}
\no{\usepackage{times}\usepackage[subscriptcorrection, slantedGreek, nofontinfo]{mtpro}
\usepackage{tikz}
\usepackage{amsmath, amssymb,amsthm,color,enumerate,comment,graphicx,float,appendix,enumitem,subcaption,booktabs,arydshln,mathrsfs,hyperref}
\usetikzlibrary{arrows.meta, calc}
\renewcommand{\alpha}{\upDelta}}
\graphicspath{{Figures/}}

\title{Stability and Reconstruction of a Nonlinearity in a Parabolic Equation from Partial Boundary Data\thanks{The work of M. Deng is supported by the Hong Kong PhD fellowship scheme, and that of B. Jin is
supported by Hong Kong RGC General Research Fund (Project 14306824), ANR / RGC Joint Research
Scheme (A-CUHK402/24),  NSFC / RGC Joint Research Scheme (N CUHK446/25) and a start-up fund from The Chinese University of Hong Kong. The work of Y. Kian is supported by the French National Research Agency ANR and Hong Kong RGC Joint Research Scheme for the project IdiAnoDiff (grant ANR-24-CE40-7039).}}
\date{}
\author{Jason Choy\thanks{Department of Mathematics, The Chinese University of Hong Kong, Shatin, N.T., Hong Kong (\texttt{zhchoy@math.cuhk.edu.hk, mldeng@link.cuhk.edu.hk, b.jin@cuhk.edu.hk})}\and Maolin Deng\footnotemark[2] \and Bangti Jin\footnotemark[2] \and Yavar Kian\thanks{Univ Rouen Normandie, CNRS, Normandie Univ, LMRS UMR 6085, F-76000 Rouen, France (\texttt{yavar.kian@univ-rouen.fr})}}

\date{\today}
\newtheorem{theorem}{Theorem}[section]

\newtheorem{lemma}{Lemma}[section]

\newtheorem{corollary}{Corollary}[section]

\newcommand{\R}{{\mathbb R}}
\newcommand{\N}{{\mathbb N}}
\newcommand{\RN}[1]{%
  \textup{\uppercase\expandafter{\romannumeral#1}}%
}

\numberwithin{equation}{section}

\renewcommand{\leq}{\leqslant}
\renewcommand{\geq}{\geqslant}

\def\epsilon{\varepsilon}
\def\phi {\varphi}
\def\supp{\text{supp}}
\allowdisplaybreaks
\begin{document}

\maketitle

\begin{abstract}
In this work, we investigate the inverse problem of determining a semilinear term in a nonlinear parabolic equation from one single boundary flux measurement taken on an arbitrary subset of the boundary. More precisely, we address both uniqueness and stability issues of the inverse problem and establish new H\"older-type stability estimates. The H\"older exponent depends explicitly on the measurement configuration as well as on regularity properties of the semilinear term. The analysis relies on a novel approach based on the derivation of a suitable integral identity involving solutions of the associated adjoint equation. This allows reformulating the inverse problem as an inverse source problem with a sign-changing source term. The main results are obtained by combining fundamental properties of parabolic equations, including maximum principle and appropriate energy estimates. Finally, we complement the theoretical analysis with an iterative reconstruction algorithm inspired by inverse source problems, and illustrate its accuracy on several numerical experiments.\\
\textbf{Key words}: determination of nonlinearity, stability estimate, partial boundary measurement, numerical reconstruction
\end{abstract}
\section{Introduction}
Let $\Omega \subset \R^d$ for $d \geq 1$ be an open bounded domain with a $\mathcal{C}^{2+\alpha} $ boundary $\partial \Omega$ with $\alpha \in (0,1)$,  $B\in \mathcal{C}^{1+\alpha} (\overline{\Omega}\times [0,T];\mathbb{R}^d)$, and $A(x,t) = (a_{ij}(x,t))_{1\leq i,j\leq d}$, with $a_{ij}\in \mathcal{C}^{1+\alpha}(\overline{\Omega}\times [0,T]) $, be symmetric and elliptic:
\begin{equation}
    \label{eqn:ellipticity}
    a_{ij}(x,t) = a_{ji}(x,t), \quad \xi^T A(x,t) \xi \geq c|\xi|^2, \quad \forall x \in \overline{\Omega}, \,\xi \in \R^d.
\end{equation}
Consider the following semilinear initial-boundary value parabolic problem
\begin{equation}
\label{IBVP}
\left\{\begin{aligned}
    \partial_t u  - \nabla \cdot (A\nabla u ) +{B} \cdot \nabla u + F(u) &= 0, & & \text{in } \Omega \times (0,T) ,\\
    u &=h, & & \mbox{on } \partial\Omega \times (0,T),\\
    u(\cdot,0) &= 0, & & \mbox{in } \Omega,
\end{aligned}\right.
\end{equation}
with the Dirichlet boundary data  $h(x,t)$ taking the product form $\varphi(t) g(x)$. 

In this work,  we investigate the inverse problem of determining the nonlinear reaction term $F$ in \eqref{IBVP} from one single boundary measurement of the flux $\partial_{\nu_A} u$ generated by a single boundary excitation $h$ applied on the lateral boundary $\partial\Omega \times (0,T)$,  
defined by \eqref{pn} below, restricted to an arbitrary open subset $S$ of the boundary $\partial\Omega$. More precisely, we focus our attention on theoretical uniqueness and stability results as well as the numerical reconstruction of the semilinear term $F$ from one single boundary measurement.

Reaction-diffusion equations of the form \eqref{IBVP} are frequently used to describe a wide variety of physical phenomena with applications in chemistry, biology, ecology and physics. Examples include chemical mixing \cite{chadam1994diffusion}, heat conduction (e.g., cooling processes for steel and glass in liquids or solids \cite{KrauseLoch:2002}), phase transition (e.g., crystallization processes of polymers \cite{Capasso:1998} and reaction kinetics of chemicals \cite{Benson:1960}), and population dynamics \cite{Yoshizawa_1970}. 
In these contexts, the concerned inverse problem can be interpreted as the determination of a physical law associated with the nonlinear term $F$ from the measurement of the flux located on a portion of the lateral boundary $\partial\Omega \times (0,T)$.

The identification of the semilinear term in a parabolic equation constitutes a classical inverse problem that has attracted considerable attention over the past few decades. Early contributions to the problem can be traced back to \cite{DR} (see also \cite{Lo,Muzylev:1980} for related developments), where the unique identification of a semilinear term (from boundary measurements generated by both boundary and internal excitations) was proved using the maximum principle and monotonicity argument. This was subsequently extended and refined in \cite{PR}, which provides existence and uniqueness for small time when the semilinear term $F(u)$ is Lipschitz. Reconstruction procedures were also proposed in the one-dimensional case in \cite{PR2} and \cite{CannonDuChateau:1998} using fixed point iteration and the output least-squares formulation, respectively. Similarly, the work \cite{CZ} established the unique recovery of a semilinear term $F(u)$ from a single boundary excitation and measurement by exploiting the maximum principle in conjunction with Hopf's lemma. These results were later generalized by Isakov \cite{isakov1993uniqueness}, who proved the unique determination of a more general nonlinear term of the form $a(x,u)$, with $x \in \Omega$ and $u \in \mathbb{R}$, from an infinite set of boundary measurements, namely the knowledge of the full Cauchy data associated with all admissible solutions. The work \cite{isakov1993uniqueness} made one fundamental methodological breakthrough through the linearization technique, which has since become a central tool in the analysis of inverse problems for nonlinear parabolic equations \cite{choy2026simultaneous,feizmohammadi2022inverse,isakov2001uniqueness,kian2024determining,Kian_Uhlmann_2023}, as well as for broader classes of nonlinear PDEs \cite{KLU,SU}. We would also like to mention  more recent contributions devoted to  uniqueness results based on a single measurement, including boundary observation \cite{homberg2019uniqueness} and final-time measurement \cite{KR1,KR2}.

Despite these important advances, the aforementioned works are predominantly confined to uniqueness results, leaving aside other fundamental issues, e.g., stability estimates and reconstruction procedures. In the one-dimensional case, a  conditional H\"{o}lder stability estimate from the  flux measurement on the full boundary was derived by \cite[Theorem 2]{egger2005global} under a strong \textit{a priori} condition imposed on the solutions of the equation (condition (C) of \cite{egger2005global}).
To the best of our knowledge, in the multidimensional setting, the only result addressing stable recovery of a semilinear term $F(u)$ in a parabolic equation from a single boundary measurement  is due to \cite{choulli2006stable} (see also \cite[Section 3.7]{choulli2009introduction}). The analysis in \cite{choulli2006stable} relies on refined properties of fundamental solutions associated with parabolic operators. However, this result is subject to certain restrictive assumptions, including analyticity conditions imposed on the nonlinear term \cite[Theorem 3]{choulli2006stable} and the analysis holds only for parabolic equations with constant coefficients. Furthermore, the measurement setting is limited to observations on the entire boundary.  In contrast with the elliptic case \cite{kian2024determination}, the literature on numerical reconstruction of a semilinear term in parabolic equations remains scarce. Existing studies \cite{CannonDuChateau:1998,egger2005global} are primarily based on least-squares minimization techniques developed independently of any rigorous uniqueness or stability analysis. This disconnect between theoretical identifiability results and computational schemes significantly restricts the practical applicability of such approaches.

The main objective of the present work is to develop a unified and simplified framework for the stable identification of the semilinear term $F(u)$ from one single boundary measurement. More precisely, fixing an arbitrary open and not empty subset $S$  of the boundary $\partial\Omega$, we establish a H\"older stability estimate on the recovery of the semilinear term $F$ from the partial flux measurement $\partial_{\nu_A} u \mid_{S\times (0,T)}$  associated with a single Dirichlet excitation $h$ (see Theorem \ref{thm:stability} for the precise statement). Moreover, the H\"older exponent  in the stability estimate depends on the location of the measurement set $S$ in accordance with the excitation $h$; and we also show that the exponent may be improved if the semilinear term $F(u)$ enjoys higher regularity (see Corollary \ref{cor:improved-stability} for the precise statement). The approach relies on the derivation of an appropriate integral identity involving solutions to the associated adjoint problem, thereby reducing the identification of the nonlinear term to an inverse source problem. The analysis makes systematic use of the maximum principle and other refined regularity properties of solutions of parabolic equations. We consider general semilinear parabolic equations with variable coefficients and restrict the observation data to an arbitrary subset of the boundary. Moreover, in contrast with existing contributions, the theoretical stability analysis is directly leveraged to design an iterative numerical reconstruction procedure inspired by inverse source methodologies. Numerically, we present an iterative reconstruction method and numerical reconstructions for noisy partial measurements.  

The rest of the paper is organized as follows. In Section \ref{sec:main}, we give a precise formulation of the inverse problem and state the main theoretical results. Then in Sections \ref{sec:uniq-prelim} and \ref{sec:stab-proof}, we present the proofs of the uniqueness and stability of the inverse problem, respectively.  Lastly, in Section \ref{sec:numer} we present numerical results obtained by means of an iterative reconstruction. Throughout, the notation $C$ denotes a generic constant which may change from one line to the other, but is always independent of the semilinear term $F$.

\section{Main results and discussions}\label{sec:main} 

In this section we state the main results of this work. Throughout for any set $X$, we use the standard notation $L^p(X)$, $1 \leq p \leq \infty$ and $\mathcal{C}^{k+r}(X)$, $k \in \N$, $r \in (0,1)$ to respectively represent Lebesgue spaces and H\"older spaces. We use the following function spaces. For $\alpha\in (0,1)$, we use also the parabolic H\"{o}lder space $$\mathcal{C}^{\alpha,\frac\alpha2}(\overline{\Omega}\times[0,T])=\{v\in \mathcal{C}(\overline{\Omega}\times[0,T]):\|v\|_{\mathcal{C}^{\alpha,\frac\alpha2}(\overline{\Omega}\times[0,T])}<\infty\},$$
with the norm $\|\cdot\|_{\mathcal{C}^{\alpha,\frac\alpha2}(\overline{\Omega}\times[0,T])}$ given by
$$
\|v\|_{\mathcal{C}^{\alpha,\frac\alpha2}(\overline{\Omega}\times[0,T])}:=\|v\|_{ \mathcal{C}(\overline{\Omega}\times[0,T])}+\sup_{x,x'\in\overline{\Omega},0\leq t\leq T}\frac{|v(x,t)-v(x',t)|}{|x-x'|^\alpha} + \sup_{0\leq t,t'\leq T,x\in \overline{\Omega}} \frac{|v(x,t)-v(x,t')|}{|t-t'|^{\frac\alpha2}}.
$$
Moreover, we define
\begin{equation*}
    \mathcal{C}^{\alpha+2,\frac{\alpha}{2}+1}(\overline{\Omega}\times[0,T])= \{v\in \mathcal{C}(\overline{\Omega}\times[0,T]):\|v\|_{\mathcal{C}^{\alpha+2,\frac{\alpha}{2}+1}(\overline{\Omega}\times[0,T]) } <\infty\},
\end{equation*}
with the norm $\|\cdot\|_{\mathcal{C}^{\alpha+2,\frac{\alpha}{2}+1}(\overline{\Omega}\times[0,T])}$ given by 
\begin{align*}
    \|v\|_{\mathcal{C}^{\alpha+2,\frac{\alpha}{2}+1}(\overline{\Omega}\times[0,T])} := & \|v\|_{\mathcal{C}(\overline{\Omega}\times[0,T])} +\|\nabla v\|_{\mathcal{C}(\overline{\Omega}\times[0,T])} + \sup_{0\leq t,t'\leq T, x\in\overline{\Omega}}\frac{|\nabla v(x,t)-\nabla v(x,t')|}{|t-t'|^{\frac{\alpha+1}{2}}}\\
   & + \|\partial_t v\|_{\mathcal{C}^{\alpha,\frac{\alpha}{2}}(\overline{\Omega}\times[0,T])} + \sum_{i,j=1}^d\|\partial_{x_i}\partial_{x_j}v\|_{\mathcal{C}^{\alpha,\frac{\alpha}{2}}(\overline{\Omega}\times[0,T])}.
\end{align*}
Similarly we define the H\"{o}lder space $\mathcal{C}^{\alpha+2,\frac{\alpha}{2}+1}(\partial\Omega\times[0,T])$. 

Consider problem \eqref{IBVP} with $0<T<\infty$, and suppose that the advection term ${B}$ lies in $\mathcal{C}^{1+\alpha}(\overline{\Omega}\times [0,T];\mathbb{R}^d)$. Also suppose that the semilinear term $F$ 
 lies in $\mathcal{C}^1 (\R)$ and satisfies 
 \begin{equation}
 \label{asmp. on F}
F(0) = 0, \quad F'(s) \geq 0, \quad s\in \R. 
\end{equation}
We further assume that the spatial boundary data $g$ lies in $\mathcal{C}^{2+\alpha}(\partial\Omega)$ with  $$\|g\|_{\mathcal{C}^{2+\alpha}(\partial\Omega)} \leq \kappa\quad \mbox{and}\quad g(\partial\Omega) = [0,R],$$ 
where $\kappa, R>0$ are fixed constants. The temporal boundary data $\varphi\in \mathcal{C}^\infty([0,T])$ is assumed to take the form 
$$
\varphi(t) = \begin{cases}
    0, & 0\leq t \leq T_1,\\
    f(t), & T_1 \leq t \leq T_2, \\
    1, & T_2 \leq t \leq T,
\end{cases}
$$
where $0\leq T_1 <T_2 \leq T$ with $T_2-T_1 \geq \frac{T}{4}$. The function $f \in \mathcal{C}^\infty([T_1,T_2])$ is assumed to be  strictly increasing on $(T_1,T_2)$ (i.e. $f'(t) > 0$, for $t\in(T_1,T_2)$) and $\|\varphi\|_{\mathcal{C}^{1+\frac\alpha 2}([0,T])}\leq \kappa$. Under these assumptions, problem \eqref{IBVP} admits a unique solution $u\in \mathcal{C}^{2+\alpha,1+\frac \alpha 2}(\overline{\Omega} \times [0,T])$ \cite[Chapter 5, Section 6]{ladyzenskaja_linear_1968}. Lastly, with $\nu = \nu (x)$ being the unit outward normal vector to the boundary $\partial\Omega$ at $x \in \partial\Omega$, we denote by $\partial_{\nu_A}$  the conormal derivative with respect to $A$ defined by
\begin{equation}\label{pn}
\partial_{\nu_A} u(x,t) =A(x,t)\nabla u(x,t)\cdot\nu(x),\quad u\in \mathcal{C}([0,T];\mathcal{C}^1(\overline{\Omega})),\ (x,t)\in\partial\Omega\times [0,T].
\end{equation}

We start with a uniqueness result that can be stated as follows. 
\begin{theorem}
\label{thm:uniqueness(new)}
    For $j=1,2$, let $F_j \in \mathcal{C}^1(\R)$ satisfy \eqref{asmp. on F}, and let $u_j$ be the solution to problem \eqref{IBVP} with $F = F_j$. 
 Let $S$ be an arbitrary open and non-empty subset of the boundary $\partial\Omega$. Suppose 
    \begin{equation}
    \label{cond. for uniqueness}
    \partial_{\nu_A} u_1(x,t) = \partial_{\nu_A} u_2 (x,t), \quad (x,t) \in S\times (0,T).
    \end{equation}
    Then, one of the two following conditions holds:
    \begin{enumerate}
        \item[{\rm(i)}] The function $F = F_2-F_1 $ changes sign an infinite number of times on $[0,R]$.
        \item[{\rm(ii)}] $F_1 \equiv F_2$ on $[0, R]$. 
    \end{enumerate} 
\end{theorem}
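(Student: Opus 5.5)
Assume (i) fails, so $F=F_2-F_1$ changes sign only finitely many times on $[0,R]$. We aim to show (ii). The plan is to reduce the problem to an inverse source problem for the difference $w=u_2-u_1$, then use an integral identity with adjoint solutions together with the maximum principle.

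\textbf{Step 1: equation for $w$.} Since $u_1,u_2$ share the boundary and initial data, $w$ solves
\[
\partial_t w-\nabla\cdot(A\nabla w)+B\cdot\nabla w+q(x,t)w=-F(u_1),
\qquad w|_{\partial\Omega\times(0,T)}=0,\quad w(\cdot,0)=0,
\]
with $q=\int_0^1F_2'(u_1+\theta w)\,d\theta\ge 0$ bounded. Hypothesis \eqref{cond. for uniqueness} gives $\partial_{\nu_A}w=0$ on $S\times(0,T)$.

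\textbf{Step 2: integral identity.} Take any solution $v$ of the backward adjoint problem
\[
-\partial_t v-\nabla\cdot(A\nabla v)-\nabla\cdot(Bv)+qv=0,\qquad v(\cdot,T)=0,
\]
with Dirichlet data $v=\psi$ supported in $S\times(0,T)$. Green's formula then gives
\[
\int_0^T\!\!\int_\Omega F(u_1)\,v\,dx\,dt=\pm\int_0^T\!\!\int_{\partial\Omega}\psi\,\partial_{\nu_A}w=0.
\]
Choose $\psi\ge 0$, $\psi\not\equiv 0$. The maximum principle for the adjoint equation ($q\ge0$, zero final data) and the strong maximum principle give $v>0$ in $\Omega\times(0,T)$.

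\textbf{Step 3: propagation of the sign of $F$.} Let $s_0=\sup\{s\in[0,R]: F\equiv0 \text{ on }[0,s]\}$ and suppose $s_0<R$. Finitely many sign changes means $F$ has a strict sign on $(s_0,s_0+\delta)$, say $F>0$ there, and $F\equiv 0$ on $[0,s_0]$.

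The maximum principle, using $F_1(0)=0$, $F_1'\ge0$, $\varphi\ge0$ and $g\ge0$, gives $0\le u_1\le \max g\,\varphi(t)\le R$. Since $\varphi$ increases from $0$, $u_1$ is small for $t$ slightly above $T_1$. More precisely, $u_1\le R\varphi(t)$, so for $t$ with $R\varphi(t)<s_0+\delta$ the values of $u_1$ lie in $[0,s_0+\delta)$. On that time window $F(u_1)\ge 0$, and $F(u_1)>0$ wherever $u_1>s_0$. Near the boundary point $x_0$ where $g(x_0)=R$, and at the time $t$ with $R\varphi(t)\in(s_0,s_0+\delta)$, continuity gives $u_1>s_0$ on an open set.

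To localize in time, run the adjoint argument with $T$ replaced by a time $t^*$ chosen so that $R\varphi(t^*)<s_0+\delta$. The identity remains valid on $(0,t^*)$, since the measurement holds on $S\times(0,T)$ and $v(\cdot,t^*)=0$. The integrand $F(u_1)v$ is then $\ge0$ and strictly positive on an open set, which contradicts the identity being zero. Hence $s_0=R$, i.e.\ $F_1\equiv F_2$ on $[0,R]$.

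\textbf{Main obstacle.} The delicate step is Step 3: showing that $u_1$ actually attains values in $(s_0,s_0+\delta)$ in the chosen window while staying below $s_0+\delta$. This uses $g(\partial\Omega)=[0,R]$, continuity of $u_1$ up to the boundary, and the strict monotonicity of $f$ on $(T_1,T_2)$, which ensures $R\varphi$ sweeps through $(s_0,s_0+\delta)$. One also has to check that the truncated-time adjoint problem is well posed with $q$ only bounded (the relevant regularity comes from \cite{ladyzenskaja_linear_1968}).
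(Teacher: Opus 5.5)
Your route is the paper's. You use the same adjoint integral identity (with $u_2-u_1$ and source $-F(u_1)$, where the paper has $u_1-u_2$ and source $F(u_2)$, which is symmetric). You truncate the time horizon at a $\widetilde T$ with $R\varphi(\widetilde T)$ below the first sign change, use the maximum principle to keep the state where $F$ has one sign, use positivity of $v$, and reach a contradiction near a boundary point $x_0$ with $g(x_0)=R$. The paper instead deduces $F(u_2)\equiv 0$ on the closed cylinder and evaluates at $(x_0,t_0)$; your open-set version is equivalent.

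Your bookkeeping with $s_0$ is cleaner than the paper's ``without loss of generality'', but finitely many sign changes does not exclude zeros of $F$ in $(s_0,s_0+\delta)$. So claim only $F\ge 0$ there. Then pick $r\in(s_0,s_0+\delta)$ with $F(r)>0$, which exists by definition of $s_0$, and choose $t^*$ with $r<R\varphi(t^*)<s_0+\delta$.

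There is one false step. From $\psi\ge 0$, $\psi\not\equiv 0$ you cannot conclude $v>0$ in all of $\Omega\times(0,T)$. For the backward problem with zero final data, $v(\cdot,t)$ depends only on $\psi$ on $(t,t^*)$. If $\psi\equiv 0$ for $t\ge\tau$, uniqueness gives $v\equiv 0$ on $\Omega\times[\tau,t^*]$. In particular, if the time support of $\psi$ ends before the time $t_r$ with $R\varphi(t_r)=r$, then $v$ vanishes exactly where $F(u_1)>0$, and the identity gives no contradiction.

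The fix is to make $\psi$ nontrivial on $\partial\Omega\times(t,t^*)$ for every $t<t^*$. As the paper does, take $\psi>0$ on $S\times(0,t^*)$, with $\psi$ and $\partial_t\psi$ vanishing at $t=t^*$ for compatibility. The strong maximum principle then gives $v>0$ on $\Omega\times(0,t^*)$, and your argument closes.

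A side remark: positivity of $v$ does not come from $q\ge 0$. In nondivergence form the adjoint equation has zeroth-order coefficient $q-\nabla\cdot B$, which can have either sign. Substitute $v=e^{\lambda(t^*-t)}z$ with $\lambda\ge\max|\nabla\cdot B|$ before applying the weak and strong maximum principles.
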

Note that the uniqueness result is unconditional in the sense that it requires neither \textit{a priori} bounds about the semilinear term $F(u)$ nor restrictions on the measurement set $S$. This stands in contrast to the conditional H\"older stability result for $F$ stated below, where an \textit{a priori} bound on $F$ is required, and both the H\"older exponent and stability constant $C$ depend on $S$.

\begin{theorem}
\label{thm:stability}
For $j=1,2$, let $F_j \in \mathcal{C}^1(\R)$ satisfy \eqref{asmp. on F} and let $u_j$ be the solution to problem \eqref{IBVP} with $F = F_j$. Suppose that $F = F_2 - F_1$ changes sign at most $N$ times on $[0,R]$ and that there exists $M>0$ such that $\| F_j\|_{\mathcal{C}^1([0,R])} \leq M.$
Let $S$ be an arbitrary non-empty open subset of the boundary $\partial\Omega$. If there exists $x' \in S$ such that $g(x') = R$, then there holds 
    \begin{equation}
        \label{stab. est. for x' in S}
\sup_{s\in [0,R]} |F_1(s)-F_2(s)| \leq C\sup_{S \times (0,T)} |\partial_{\nu_A} u_1 - \partial_{\nu_A} u_2|^{\frac{1}{(d+2)^{N+1}}},
\end{equation}
with $C = C(\Omega,T,A,{B},M,N,\kappa,d^*)>0$, with 
\begin{align}\label{eqn:d-ast}
d^* = \min_{\substack{x \in S \\ g(x)=R}} d(x,\partial S) .
\end{align}
Otherwise, with $C = C(\Omega,T,A,{B},M,N,\kappa,|S|)>0$, there holds
\begin{equation}
        \label{stab. est. for x' not in S}
\sup_{s\in [0,R]} |F_1(s)-F_2(s)| \leq C \left( \sup_{S \times (0,T)} |\partial_{\nu_A} u_1 - \partial_{\nu_A} u_2| \right)^{\frac{1}{(d+3)^{N+1}}}.
\end{equation}
\end{theorem}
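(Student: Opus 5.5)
We reduce the problem to an inverse source problem for the difference $w=u_2-u_1$ and exploit the sign structure of $F=F_2-F_1$. The difference satisfies
\[
\partial_t w-\nabla\cdot(A\nabla w)+B\cdot\nabla w+q\,w=-F(u_2),\qquad q=\int_0^1F_1'(u_1+\theta w)\,d\theta\ge 0,
\]
with $w=0$ on $\partial\Omega\times(0,T)$ and $w(\cdot,0)=0$. Since $g\ge0$, $\varphi\ge0$ and \eqref{asmp. on F} holds, the maximum principle gives $0\le u_j\le R$. Multiplying by a solution $v$ of the adjoint problem, which is backward in time, vanishes at $t=T$, and has Dirichlet data $\psi$ supported in $S\times(0,T)$, we obtain the integral identity
\[
\int_0^T\!\!\int_{S}\partial_{\nu_A}w\,\psi\,d\sigma\,dt=\int_0^T\!\!\int_\Omega F(u_2)\,v\,dx\,dt .
\]
The left-hand side is bounded by $\epsilon:=\sup_{S\times(0,T)}|\partial_{\nu_A}u_1-\partial_{\nu_A}u_2|$ times $\|\psi\|_{L^1}$. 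If we take $\psi\ge0$, then $v\ge0$ by the maximum principle, and a Hopf-type or parabolic Harnack lower bound makes $v$ quantitatively positive near $S$, for instance $v\gtrsim\rho^{\,\cdot}$ on the set where it is supported away from the boundary.

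We then induct on the number of sign changes $N$. Let $0<s_1<\dots<s_N<R$ be the sign-change points, and consider the top interval $I=(s_N,R]$, on which $F$ has constant sign. Suppose $|F|$ attains a value $\delta$ at some point $s^*\in I$. Because $\|F\|_{\mathcal C^1}\le 2M$, we have $|F|\ge\delta/2$ on an interval of length $\sim\delta/M$ around $s^*$.

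The point is to localize the source region. Where $g(x')=R$ and $x'\in S$, the solution $u_2$ near $(x',t)$ for $t\ge T_2$ takes values close to $R$. By Lipschitz continuity of $u_2$, which follows from the $\mathcal C^{2+\alpha,1+\alpha/2}$ bound depending on $\kappa,M$, the set $\{u_2>s_N\}$ contains a boundary half-ball of radius $\sim\delta$ and a time interval. We choose $\psi$ concentrated near $x'$ on a patch of size $r\sim\delta$, so that $v$ is concentrated in a region of volume $\sim r^{d+2}$ where $u_2\in I$ and $F(u_2)$ has a fixed sign with magnitude at least $\delta/2$. Contributions from the region where $u_2\notin I$ must be controlled by the decay of $v$ away from its support; this requires $v$ to be small there, which is where the parabolic scaling enters. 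The result is
\[
\delta\, r^{d+2}\lesssim \epsilon + (\text{tail}),
\]
which gives $\sup_I|F|\le C\epsilon^{1/(d+2)}$.

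To descend to the intervals with lower values we need a different mechanism, since the remaining intervals are not automatically attained near $x'$. Here we use the time profile: $\varphi$ strictly increases on $(T_1,T_2)$, so at some time the values $u_2\approx g(x')\varphi(t)$ sweep through $[0,R]$ near $x'$. Having already shown $|F|\le C\epsilon^{1/(d+2)}$ on $I$, we treat the part of the source with $u_2\in I$ as an error term of size $\epsilon^{1/(d+2)}$ and repeat the argument on the next interval, with $\epsilon$ replaced by $C\epsilon^{1/(d+2)}$. After $N+1$ steps this yields the exponent $1/(d+2)^{N+1}$. The constant depends on $d^*$ because the admissible patch radius must remain inside $S$.

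When no point of $S$ has $g=R$, the value $R$ is attained only away from $S$, and the positivity of $v$ at that point must come from interior propagation (Harnack chains) from $S$ into $\Omega$. The loss in the Harnack/boundary-layer estimate costs an extra power of the scale, giving $d+3$ in place of $d+2$, with a constant depending on $|S|$.

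The main obstacle is the localization estimate: producing an adjoint solution $v$ that is quantitatively positive on a small region where $u_2$ lies in the current constancy interval of $F$, and quantitatively small elsewhere, with explicit dependence on the scale $r$. I would first try explicit barriers built from heat-kernel-type subsolutions near the boundary patch, combined with the comparison principle.
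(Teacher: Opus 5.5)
There is a genuine gap, and it is exactly the step you leave as the ``main obstacle'': the tail term cannot be controlled, so the top-down induction never closes. Write $\mu:=\sup_{S\times(0,T)}|\partial_{\nu_A}u_1-\partial_{\nu_A}u_2|$ for the data.

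Take your adjoint solution $v$, with final time $T$ and data $\psi\ge0$ supported near $x'$ up to some time $t^*$. By the strong maximum principle, $v$ is strictly positive on all of $\Omega\times(0,t^*)$. At each time $t$, $u_2$ takes every value in $[0,\varphi(t)R]$, already on $\partial\Omega$ since $g(\partial\Omega)=[0,R]$. For $t$ with $\varphi(t)R\le s_N$, $u_2$ lies below $s_N$ everywhere. So $\{u_2\notin I\}$ always carries a non-negligible part of the mass of $v$. There $F$ has unknown sign and is only bounded by $2M$. The best you can write is $\frac{\delta}{2}\int_{Q}v\le\mu\|\psi\|_{L^1}+2M\int_{\{u_2\notin I\}}v$, with $Q$ your small region, and this yields only the trivial bound $\delta\lesssim M$.

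No barrier construction can make the last integral small relative to $\int_Q v$, because $v$ is dictated by the equation. Shrinking the patch makes things worse. Heuristically, for unit data on a patch of radius $r$, the part of $v$ within distance $r$ of the patch has mass of order $r^{d}$ per unit time of forcing. The Poisson-kernel-like part spread into the interior and to earlier times has mass of order $r^{d-1}$.

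Your bookkeeping is also off: with $r\sim\delta$, $\delta r^{d+2}\lesssim\mu$ would give $\mu^{1/(d+3)}$, and the data term should carry $\|\psi\|_{L^1}$. Descending to lower intervals does not help, since at every stage the still-uncontrolled lower values of $F$ sit inside the support of $v$.

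The missing idea is to run the induction \emph{bottom-up} and truncate the time horizon so that no tail exists at all. Put $\widetilde T_j=f^{-1}(s_{j+1}/R)$. Solve the adjoint problem only on $(0,\widetilde T_j)$, with $v(\cdot,\widetilde T_j)=0$ and a fixed smooth $h\ge0$ supported in $S$. Since $u_2(\cdot,0)=0$ and $0\le\varphi(t)g\le s_{j+1}$ for $t\le\widetilde T_j$, the maximum principle gives $0\le u_2\le s_{j+1}$ on the whole slab $\Omega\times(0,\widetilde T_j)$.

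This upper bound is the decisive use of the monotonicity of $\varphi$. Nothing comparable holds from below: the integral identity cannot start at a positive time $T'$ without producing the unknown term $\int_\Omega(u_1-u_2)(x,T')v(x,T')\,dx$. That is why one must start from $[0,s_1]$ rather than $(s_N,R]$.

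For $j=0$, the integrand $F(u_2)v$ is nonnegative on the whole slab. You may therefore discard everything outside an $\varepsilon$-neighbourhood of $(x',t_0)$, $t_0=f^{-1}(r_0/R)$, where $u_2(x',t_0)=r_0$ and $F(u_2)\ge F(r_0)-C\varepsilon$. In Case 1, take $h(x',t_0)=1$ with $h$ independent of $\varepsilon$, so its Schauder norm stays bounded. Then $v\ge\frac12$ on a $c\varepsilon$-neighbourhood, hence $F(r_0)\varepsilon^{d+1}\lesssim\mu+\varepsilon^{d+2}$ and $F(r_0)\lesssim\mu^{1/(d+2)}$.

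For $j\ge1$, the only part of the slab outside $\{s_j\le u_2\le s_{j+1}\}$ is $\{u_2\le s_j\}$, where $|F|$ is already bounded by the previous steps. That contribution is absorbed through $\|v\|_{L^1}\le C\|h\|_{L^1}$, which follows from $v\le v_0$ and a fundamental-solution bound. It then acts as new data of size $C\mu^{1/(d+2)^j}$. This produces the exponent $1/(d+2)^{N+1}$ with no decay or localization of $v$ whatsoever.

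In Case 2, the extra power of $\varepsilon$ does not come from Harnack chains. It comes from Hopf's lemma applied to the comparison solution $v_M\le v$, which gives $v\ge c\,\mathrm{dist}(x,\partial\Omega)$ near $(x',t_0)$.
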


Finally, we improve the H\"older exponent appearing in Theorem \ref{thm:stability} under stronger regularity assumptions on the semilinear term $F(u)$. 
\begin{corollary}
\label{cor:improved-stability}
For $j=1,2$, let $F_j \in \mathcal{C}^2(\R)$ satisfy \eqref{asmp. on F} with $\|F_j\|_{\mathcal{C}^2([0,R])} \leq M$ for some $M > 0$, and let $u_j$ be the solution to problem \eqref{IBVP} with $F = F_j$. Suppose that $F = F_2 - F_1$ changes sign at most $N$ times on $[0,R]$, and let $S$ be an arbitrary non-empty open subset of the boundary $\partial\Omega$. If there exists $x' \in S$ such that $g(x') = R$, then there holds 
\begin{equation}
\label{x in S, increased regularity on F}
\sup_{s \in [0,R]} |F_1(s) - F_2(s)| \leq C \sup_{S \times (0,T)} |\partial_{\nu_A} u_1 - \partial_{\nu_A} u_2| ^{\left(\frac{2}{d+3}\right)^{N+1}},
\end{equation}
with $C = C(\Omega, T, A,{B}, M,N, \kappa, d^*) > 0$, with $
d^*$ defined in \eqref{eqn:d-ast}. 
Otherwise, there holds 
\begin{equation}
\label{x not in S, increased regularity on F}
\sup_{s \in [0,R]} |F_1(s) - F_2(s)| \leq C \left( \sup_{S \times (0,T)} |\partial_{\nu_A} u_1 - \partial_{\nu_A} u_2| \right)^{\left(\frac{2}{d+4}\right)^{N+1}},
\end{equation}
with $C = C(\Omega, T, A,{B}, M,N, \kappa,|S|) > 0$.
\end{corollary}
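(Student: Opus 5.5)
The plan is to rerun the proof of Theorem \ref{thm:stability} and change only the step where the $\mathcal{C}^1$ bound on $F$ turns a small integral into a small pointwise value. Those proofs use $\mathcal{C}^2$ control in place of $\mathcal{C}^1$ control. The earlier structure is kept as is: the integral identity with adjoint solutions, which turns the problem into an inverse source problem with source $F(u_1)$ or $F(u_2)$, and the induction over the at most $N$ sign changes of $F=F_2-F_1$.

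In that induction, each stage bounds a weighted average of $F$ over a small interval $[s_0-\delta,s_0]$ on which $F$ keeps one sign. The average is sampled on the set where $u\approx s_0$ near a boundary point. By the maximum principle and the structure of $\varphi$ and $g$, these are points $x$ with $g(x)$ close to $R$ and times where $\varphi$ increases. The bound takes the form
\begin{equation*}
\Big|\int_{s_0-\delta}^{s_0} F(s)\,w(s)\d s\Big| \leq C\,\delta^{-m}\,\epsilon,
\end{equation*}
where $\epsilon$ is the error at the current stage. The exponent $m$ counts the dimensions of the localization: $m=d+1$ when $x'\in S$ and $m=d+2$ otherwise. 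The localization uses adjoint solutions concentrated in a parabolic cylinder of size $\delta$, plus one extra dimension to reach $S$ in the second case.

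With only a $\mathcal{C}^1$ bound, one uses $|F(s)-F(s_0)|\leq M\delta$ on the interval. Since $F$ does not change sign there, the average controls $|F(s_0)|$ up to an error $M\delta$. Balancing $\delta$ with $\delta^{-(d+1)}\epsilon$ gives $\delta=\epsilon^{1/(d+2)}$, the exponent in Theorem \ref{thm:stability}. Iterating over the $N+1$ monotone pieces gives the power $(d+2)^{-(N+1)}$.

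With $F\in\mathcal{C}^2$, the plan is to use a symmetric interval $[s_0-\delta,s_0+\delta]$, or a symmetric weight $w$, centred at $s_0$. The Taylor expansion $F(s)=F(s_0)+F'(s_0)(s-s_0)+O(M\delta^2)$ then has a first-order term that integrates to zero, so the error becomes $O(\delta^2)$. Balancing $\delta^2\sim\delta^{-(d+1)}\epsilon$ gives $\delta=\epsilon^{1/(d+3)}$, and the quantity $|F(s_0)|$ is then bounded by $\delta^2=\epsilon^{2/(d+3)}$. Iterating $N+1$ times yields the exponent $(2/(d+3))^{N+1}$ in \eqref{x in S, increased regularity on F}. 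In the case $x'\notin S$ the exponent $d+2$ becomes $d+3$, which gives $(2/(d+4))^{N+1}$ in \eqref{x not in S, increased regularity on F}.

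The constants keep the same dependence as before, except that $M$ is now the $\mathcal{C}^2$ bound. Those constants come from Schauder bounds on $u_j$ and from lower bounds on $\partial_t u$ near the relevant points via Hopf-type arguments.

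The main obstacle is making the first-order term cancel. The effective weight $w$ comes from the pullback of the adjoint solution through the level sets of $u$, so it need not be symmetric about $s_0$. There are two options. The first is to choose the localizing adjoint data so that the induced weight is even to leading order. The second is to form a combination of two averages, centred at $s_0\pm\delta/2$, that cancels $F'(s_0)$. The remainder of this cancellation is $O(\delta^2)$ times the $\mathcal{C}^1$ norm of $w$, which is bounded by $\mathcal{C}^{2+\alpha,1+\alpha/2}$ regularity of $u$.

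A second difficulty is that a symmetric interval may cross a sign change of $F$. I would handle this as the original proof does: place the interval inside a sign-constant piece, and treat pieces shorter than $\delta$ at the next level of the induction. This is what costs one extra power at each level.
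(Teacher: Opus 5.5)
Your exponent bookkeeping matches the paper: in Case~1 you get $F(r_0)\le C(\varepsilon^{-d-1}\mu+\varepsilon^{2})$, in Case~2 the same with $\varepsilon^{-d-2}$, and then the same iteration. However, the step you yourself call the main obstacle, killing the first-order Taylor term, is left open. You also miss the observation that makes it trivial. You never need $F$ at an arbitrary level $s_0$, only $\sup|F|$, so you can localize at the level where $|F|$ is maximal on the sign-constant piece. The paper centres the cylinder $\mathcal{O}_{x',\varepsilon}\times\mathcal{T}_{0,\varepsilon}$ at $(x',t_0)$, where $u_2(x',t_0)=\varphi(t_0)g(x')=r_0$ and $|F(r_0)|=\sup_{[s_0,s_1]}|F|$. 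Because $F$ vanishes at $s_0=0$ and at the sign changes, $r_0$ is an interior extremum. The only exception is a maximizer at the endpoint $R$, which the paper does not treat separately. Hence $F'(r_0)=0$, and Taylor's formula gives the \emph{pointwise} bound $F(u_2(x,t))\ge F(r_0)-MC_1^2\varepsilon^2$ on the whole cylinder. Insert this into the analogue of \eqref{stab. est main ineq.} together with $\int_{\mathcal{T}_{0,\varepsilon}}\int_{\mathcal{O}_{x',\varepsilon}}v\ge c\,\varepsilon^{d+1}$, and that is the entire proof. No symmetric interval, weight or cancellation enters.

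As written, your fixes do not stand. The identity \eqref{main equality} never gives you a weighted average of $F$ over a window of levels. The adjoint solution $v$ cannot be concentrated: it is positive throughout $\Omega$ before the support of $h$ ends, and $h$ has a fixed $\mathcal{C}^{2+\alpha,1+\frac\alpha2}$ norm independent of $\varepsilon$. Localization comes only from discarding the complement of the cylinder via $F(u_2)v\ge 0$, which is a one-sided inequality. There is also no weight to shape by choosing $h$. To leading order $v$ is constant on the cylinder, and the first moment $\int(u_2-s_0)v$ is dictated by $\nabla_{x,t}u_2(x',t_0)$ and by the half-ball geometry of $\mathcal{O}_{x',\varepsilon}$ (it is cut by $\partial\Omega$). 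So your first option fails.

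Your second option can be rescued, but only in a form you did not write down:
\begin{enumerate}
\item Take two cylinders of radius $c\delta$ about $(x',t_\pm)$ with $u_2(x',t_\pm)=s_0\pm\delta/2$; on each, $u_2-s_0$ has a fixed sign.
\item Get one positivity inequality from each cylinder, each with its own adjoint datum.
\item Add them with positive weights chosen to cancel the $F'(s_0)$ terms exactly.
\end{enumerate}
This yields $F(s_0)\le C(\delta^{-d-1}\mu+\delta^2)$ without needing $F'(s_0)=0$, at the price of two adjoint solutions.

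Two smaller inaccuracies remain. The loss of exponent per stage does not come from ``pieces shorter than $\delta$''. It comes from inserting the previous stage's bound, multiplied by $\|v\|_{L^1}\le C_3\|h\|_{L^1}$ from Lemma~\ref{lem:schauder-type est}(iii), as the data error of the next stage. Also, no lower bound on $\partial_t u$ is used, and none is available since $f'$ is not bounded below. Hopf's lemma enters only in Case~2, through $v_M\ge C\,d(x,\partial\Omega)$.
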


To the best of our knowledge, Theorem~\ref{thm:stability} provides the first stability result for the determination of a semilinear term \( F(u) \) in a parabolic equation from partial boundary measurements. Specifically, the measurement $\partial_{\nu_A} u$ is taken on an arbitrary nonempty open subset \( S \subset \partial\Omega \), and the influence of the location of \( S \) on the Hölder exponent of the stability estimates \eqref{stab. est. for x' in S} and \eqref{stab. est. for x' not in S} is also analyzed. The estimate \eqref{stab. est. for x' in S} relaxes the regularity requirement in \cite[Theorem 2]{choulli2006stable} from $F_j\in C^2([0,R])$ to $F_j\in C^1([0,R])$. Furthermore, Corollary~\ref{cor:improved-stability} quantifies the dependence of the exponent on the regularity assumed for the nonlinearity. Note that restricting the measurement to a portion $S$ of the boundary $\partial\Omega$ can be useful for various practical applications. In particular, the case of data on a disjoint set \( S \cap \textrm{supp}(g) = \emptyset \) may be required in practical applications, for which the overlap between excitations and measurements cannot be allowed.
In addition to restricting the flux data to an arbitrary portion $S$ of the boundary $\partial\Omega$, in contrast to \cite{choulli2006stable}, the results of Theorem~\ref{thm:stability} and Corollary \ref{cor:improved-stability} are formulated for a general class of parabolic equations with variable coefficients depending on both space and time. Moreover, the admissible semilinear terms \( F(u) \) are not subject to any analyticity condition as in \cite[Theorem 3]{choulli2006stable}.

The proof of Theorem \ref{thm:stability} relies crucially on the integral identity \eqref{main equality} associated with the adjoint equation, combined with techniques from inverse source problems to recover the nonlinearity $F(u)$. This is complemented by refined properties of solutions to parabolic equations, including  the maximum principle and suitable energy estimates, collected in Section \ref{sec:uniq-prelim}. One key point is the flexibility in the choice of solutions to the adjoint problem, which allows us to work with one single partial boundary measurement. This contrasts sharply with the approach in \cite{choulli2006stable}, which relies on Gaussian lower bounds of the fundamental solution of parabolic equations with a Neumann boundary condition.
Moreover, we derive a numerical reconstruction of the nonlinearity $F(u)$ following the same strategy. To the best of our knowledge, this is the first work combining stability analysis with numerical reconstruction for a semilinear term $F(u)$ in a multidimensional parabolic equation.

%, to the existing literature devoted to the determination of semilinear terms from parabolic problems (see for example \cite{choulli2006stable}), which does not employ an adjoint formulation and instead relies on properties of the fundamental solution of \eqref{IBVP}, which inherently requires full boundary data.

%We briefly summarize the overall proof strategies for the theoretical results. The proof of the uniqueness and stability results both rely on consideration of a suitable adjoint problem \eqref{defn of v}. Inspired by the work \cite{kian2024determination} which utilizes a similar adjoint approach for a semilinear elliptic problem, we first derive a fundamental integral identity \eqref{main equality} which relates the solution of the forward problem \eqref{IBVP} to the solution of the adjoint problem. Then, we derive both results from the integral identity by utilizing properties of solutions to parabolic problems as well as by a careful choice of boundary data for the adjoint problem. We note that the freedom in selecting this boundary data is precisely what enables us to work with only partial boundary measurements. This is in contrast, to the existing literature devoted to the determination of semilinear terms from parabolic problems (see for example \cite{choulli2006stable}), which does not employ an adjoint formulation and instead relies on properties of the fundamental solution of \eqref{IBVP}, which inherently requires full boundary data.

\section{Proof of uniqueness and preliminaries}
\label{sec:uniq-prelim}
This section is dedicated to the proof of the uniqueness result stated in Theorem \ref{thm:uniqueness(new)} and the derivation of different properties of solutions of parabolic equations which will be key ingredients in the proof of the stability estimates in Theorem~\ref{thm:stability}.
We introduce two related problems which will be crucial for the proof of both results. First, for $j=1,2$, let $u_j$ be the solution of the problem \eqref{IBVP} with $F = F_j$. Then,  $u = u_1-u_2$ solves the IBVP
\begin{equation}
\label{eqn solved by u}
\left\{\begin{aligned}
    \partial_t u  - \nabla \cdot (A \nabla u ) + {B}\cdot \nabla u + qu &= F(u_2), && \text{in } \Omega \times (0,T) ,\\
    u &= 0, && \text{on } \partial\Omega \times (0,T) ,\\
    u(\cdot,0) &= 0,&& \mbox{in }\Omega,
\end{aligned}\right.
\end{equation}
where $F=F_2-F_1$ and the term $q \in \mathcal{C}^1(\overline{\Omega}\times [0,T])$ is given by 
$$
q(x,t)=\int_0^1 F_1'(su_1(x,t) + (1-s)u_2(x,t)) \d s.
$$
Fix $0<\widetilde{T}<T$ and let $v$ be the solution of the adjoint problem 
\begin{equation}
\left\{\begin{aligned}
\label{defn of v}
    -\partial_t v -\nabla \cdot (A \nabla v) - \nabla \cdot ({B}v) + qv &= 0,& & \text{in } \Omega \times (0,\widetilde{T}), \\
    v(x,t) &= h, & &\text{on }  \partial\Omega \times (0,\widetilde{T}), \\
    v(\cdot,\widetilde{T}) &= 0,&  &  \mbox{in }\Omega,
\end{aligned}\right.
\end{equation}
where the boundary data $h \in \mathcal{C}^\infty(\partial\Omega \times [0,\widetilde{T}])$ is nonnegative on $\partial\Omega \times (0,\widetilde{T})$ and 
should satisfy the compatibility conditions $h(x,\widetilde{T}) = \partial_th(x,\widetilde{T})= 0$ for $x\in \partial\Omega$. 
The precise choice of $h$ will change below. Now, we can prove Theorem \ref{thm:uniqueness(new)}.

\begin{proof}[Proof of Theorem \ref{thm:uniqueness(new)}] We prove this result by contradiction. Namely, let $F = F_2 - F_1$ and suppose that $F$ changes sign a finite number of times on $[0, R]$ but $F\not\equiv 0$ on $[0, R]$. Without loss of generality, we can find $r$ and $s$ with $0<r<s<1$ such that
    $$
    F(rR) > 0\quad\mbox{and} \quad F \geq 0 \text{ on } [0,sR] .
    $$
    Set $\widetilde{T} = f^{-1} (s)$. We choose the boundary data $h \in \mathcal{C}^\infty(\partial\Omega \times [0,\widetilde{T}])$ to be a bump function satisfying 
    $$
    \supp(h) = \overline{S} \times [0,\widetilde{T}] \quad \text{and} \quad h>0 \text{ on } S\times (0,\widetilde{T}).
    $$
Let $v$ be the solution of the adjoint problem \eqref{defn of v}. Multiplying equation \eqref{eqn solved by u} by $v$, integrating over the domain $\Omega$ and integrating by parts yield
    \begin{equation}
\label{main equality}
\int_0^{\widetilde{T}} \int_\Omega
 F(u_2(x,t))v(x,t)  \d x\d t = \int_0^{\widetilde{T} }\int_{\partial\Omega} (\partial_{\nu_A} u_1 - \partial_{\nu_A} u_2) h(x,t) \d \sigma(x) \d t.\footnote{The identity \eqref{main equality} relates  the variation $F$ of the semilinear terms to the change in the measured flux.}
\end{equation}
By the support condition of $h$ and condition \eqref{cond. for uniqueness}, the identity \eqref{main equality} implies
\begin{equation}
\label{int F(u_2)v = 0}
\int_0^{\widetilde{T}} \int_\Omega F(u_2(x,t))v(x,t) \d x\d t = 0. 
\end{equation}
By applying the weak maximum principle to $u_2$ (see, e.g., \cite[p. 39]{friedman2008partial} and \cite[Theorem 1.46]{choulli2009introduction}), we derive that for $(x,t) \in \overline{\Omega}\times [0,\widetilde{T}]$, there holds 
$$
0 \leq u_2(x,t) \leq \max_{(x,t) \in \partial\Omega \times [0,\widetilde{T}]} \varphi(t)g(x) =sR,
$$
and thus by Assumption \eqref{asmp. on F}, we have $F(u_2) \geq 0$ on $\overline{\Omega}\times [0,\widetilde{T}]$. The strong maximum principle, applied to the function $\widetilde{v}(x,t) = v(x,\widetilde{T}-t)$, implies
$$
v(x,t) >0, \quad (x,t) \in \Omega \times (0,\widetilde{T}). 
$$
Hence, we may deduce from \eqref{int F(u_2)v = 0} that $F(u_2) \equiv 0$ on $\Omega \times (0,\widetilde{T})$. Moreover, by the continuity of the map $(x,t) \mapsto F(u_2(x,t))$, we can deduce $F(u_2) \equiv 0$ on $\overline{\Omega}\times [0,\widetilde{T}]$. Now, pick an arbitrary point $x_0 \in \partial\Omega$ with $g(x_0)=R$ and set $t_0 = f^{-1}(r)$. Then,
$$
F(u_2(x_0,t_0)) = F(\varphi(t_0)g(x_0)) = F(rR) >0, 
$$
which clearly contradicts the assertion $F(u_2) \equiv 0$. This concludes the proof of Theorem \ref{thm:uniqueness(new)}. 
\end{proof} 

Next, we show several properties of the solution $v$ of problem \eqref{defn of v} that will be required for deriving the stability estimates stated in Theorem \ref{thm:stability}.
\begin{lemma}
\label{lem: v_M leq v leq v_0}
Suppose that $\|F\|_{\mathcal{C}^1([0,R])} \leq M$, and let $v_\tau$ for $\tau = 0,M$ be the solution to the auxiliary problem 
$$
\left\{\begin{aligned}
    -\partial_t v_\tau -\nabla \cdot (A \nabla v_\tau) - \nabla \cdot ({B}v_\tau) + \tau v_\tau &= 0,& & \text{in } \Omega \times (0,\widetilde{T}), \\
    v_\tau&= h &&\text{on }  \partial\Omega \times (0,\widetilde{T}), \\
    v_\tau(\cdot,\widetilde{T}) &= 0,  && \mbox{in }\Omega.
\end{aligned}\right.
$$
Then, we have 
$$
v_M(x,t) \leq v(x,t) \leq v_0(x,t), \quad (x,t) \in \Omega \times [0,\widetilde{T}].
$$
\end{lemma}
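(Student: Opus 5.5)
The plan is a direct application of the weak maximum principle (comparison principle) for the backward parabolic operator in \eqref{defn of v}, after two preliminary observations. First, I need $0\le q\le M$ on $\overline{\Omega}\times[0,\widetilde T]$. By the weak maximum principle applied to $u_1,u_2$ (as in the proof of Theorem \ref{thm:uniqueness(new)}), both take values in $[0,R]$. Hence every convex combination $su_1+(1-s)u_2$ also lies in $[0,R]$. By \eqref{asmp. on F} and the a priori bound (here I read the hypothesis as $\|F_1\|_{\mathcal{C}^1([0,R])}\le M$, which is the setting of Theorem \ref{thm:stability}), this gives $0\le F_1'\le M$ there, and therefore $0\le q\le M$.

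Second, I need $v_0,v_M\ge 0$ (and also $v\ge0$). To handle a possibly negative zeroth-order coefficient, write the operator as $\mathcal{L}w:=-\partial_t w-\nabla\cdot(A\nabla w)-B\cdot\nabla w-(\nabla\cdot B)w$. The coefficient $-\nabla\cdot B$ need not be nonnegative. I would therefore pass to $\widetilde w(x,t)=e^{\lambda (t-\widetilde T)}w(x,\widetilde T - t)$ with $\lambda\ge \|\nabla\cdot B\|_{L^\infty}$. This turns the problem into a forward parabolic equation whose zeroth-order coefficient $\lambda-\nabla\cdot B+c$ is nonnegative for any $c\ge0$. The classical weak maximum principle (e.g.\ \cite[p.~39]{friedman2008partial}, \cite[Theorem 1.46]{choulli2009introduction}) then applies. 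Since $h\ge0$ and the final data vanish, $v_0,v_M,v\ge0$.

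Main comparison step: set $w_1=v_0-v$ and $w_2=v-v_M$. Both vanish on $\partial\Omega\times(0,\widetilde T)$ and at $t=\widetilde T$, and they satisfy
\begin{equation*}
\mathcal{L}w_1+qw_1 = q\,v_0\ \ge 0,\qquad \mathcal{L}w_2+qw_2=(M-q)\,v_M\ \ge0 .
\end{equation*}
These follow by subtracting the equations: for $w_1$, $\mathcal{L}v_0=0$ and $\mathcal{L}v+qv=0$; for $w_2$, $\mathcal{L}v+qv=0$ and $\mathcal{L}v_M+Mv_M=0$. The right-hand sides are nonnegative by the two preliminary observations. Applying the same exponentially weighted, time-reversed weak maximum principle (with zeroth-order coefficient $q\ge0$) gives $w_1,w_2\ge0$, which is the claim.

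The only delicate point is the sign of the zeroth-order term coming from $-\nabla\cdot(Bv)$, handled by the exponential weight. The regularity of $q\in\mathcal{C}^1$ and $B\in\mathcal{C}^{1+\alpha}$ ensures classical solutions, so the maximum principle is applicable.
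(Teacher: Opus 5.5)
Your argument is essentially the paper's proof, a comparison via the weak maximum principle. The only cosmetic difference is that the paper keeps $q$ out of the operator and uses the right-hand side $qv\ge 0$ (relying on $v\ge 0$), whereas you keep $q$ in the operator and use $qv_0\ge 0$ and $(M-q)v_M\ge 0$. Your remarks that $q$ is controlled by $\|F_1\|_{\mathcal{C}^1([0,R])}$ and that the possibly negative coefficient $-\nabla\cdot B$ must be absorbed address points the paper passes over silently.

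There is one sign slip. With $\widetilde w(x,t)=e^{\lambda(t-\widetilde T)}w(x,\widetilde T-t)$, the zeroth-order coefficient of the forward equation becomes $c-\nabla\cdot B-\lambda$, which is worse rather than better. The weight must be reversed, e.g.\ $\widetilde w(x,t)=e^{-\lambda t}w(x,\widetilde T-t)$, which gives $c-\nabla\cdot B+\lambda\ge 0$ as you intended.
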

\begin{proof}
By the construction of $q$, we deduce $\|q\|_{\mathcal{C}(\overline{\Omega}\times [0,T])} \leq \|F\|_{\mathcal{C}^1([0,R])} \leq M$. For $\tau=0$, note that the function $v_0-v$ satisfies
$$
-\partial_t (v_0-v) - \nabla \cdot (A\nabla (v_0-v)) - \nabla \cdot ({B}(v_0-v)) = qv, \quad \text{in } \Omega \times [0,\widetilde{T}]. 
$$
Now, since $q(x,t)v(x,t) \geq 0 $ on $\Omega \times (0,\widetilde{T})$ and $v_0 - v \equiv 0$ on $\partial\Omega \times (0,\widetilde{T})$, by the weak maximum principle, we deduce $$\min_{(x,t) \in \overline{\Omega} \times [0,\widetilde{T}]} v_0(x,t)- v(x,t) = 0.
$$
Then it follows directly that $v(x,t) \leq v_0(x,t)$, $(x,t) \in \Omega \times [0,\widetilde{T}]$. The assertion $v\geq v_M$ can be deduced analogously by considering $v-v_M$.
\end{proof}

\begin{lemma}
\label{lem:schauder-type est} Suppose that $\|F\|_{\mathcal{C}^1([0,R])}\leq M$. Then, the following three statements hold.
\begin{enumerate}  \item[{\rm(i)}] Let $u$ be the unique solution of problem \eqref{IBVP}. Then there exists $C_1 = C_1(\Omega,T,A,{B},M,\kappa)>0$ such that
$$
\|u\|_{\mathcal{C}^{2+\alpha,1+\frac\alpha 2}(\overline{\Omega}\times[0,T])} \leq C_1.
$$
\item[{\rm(ii)}] Suppose that $h \in \mathcal{C}^\infty(\partial{\Omega}\times [0,\widetilde{T}])$ satisfies the compatibility condition $h(\cdot,\tilde{T}) = 0$ on the boundary $\partial\Omega$ and let $v$ be the unique solution of problem \eqref{defn of v}. Then there exists $C_2 = C_2(\Omega,T,A,{B},M) >0$ such that 
$$
\|v\|_{\mathcal{C}^{2+\alpha,1+\frac \alpha 2}(\overline{\Omega}\times [0,\widetilde{T}])} \leq C_2 \|h\|_{\mathcal{C}^{2+\alpha,1+\frac\alpha 2}(\partial\Omega\times [0,\widetilde{T}])}.
$$
\item[{\rm(iii)}] Suppose that $h \in \mathcal{C}^\infty(\partial{\Omega}\times [0,\widetilde{T}])$ satisfies the compatibility condition $h(\cdot,\tilde{T}) = 0$ on the boundary $\partial\Omega$ and let $v$ be the unique solution of problem \eqref{defn of v}. Then there exists $C_3 = C_3(\Omega,T,A,{B},M) >0$ such that 
$$
\|v\|_{L^1(\Omega\times (0,\widetilde{T}))} \leq C_3 \|h\|_{L^1(\partial\Omega\times (0,\widetilde{T}))}.
$$
\end{enumerate}
\end{lemma}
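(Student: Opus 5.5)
The plan is to obtain all three statements from classical linear parabolic theory. The a priori bound $\|F\|_{\mathcal{C}^1([0,R])}\leq M$ is used to control the zeroth-order coefficients uniformly.

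\textbf{Statement (i).} First apply the weak maximum principle, exactly as in the proof of Theorem~\ref{thm:uniqueness(new)}. This gives $0\leq u\leq R$ on $\overline{\Omega}\times[0,T]$, so that $F(u)$ only samples $F$ on $[0,R]$. Since $F(0)=0$, we may write $F(u)=c(x,t)u$ with $c=\int_0^1F'(su)\d s$, where $0\leq c\leq M$. Then $u$ solves a linear equation with bounded coefficient $c$ and Dirichlet data $\varphi g$. The data satisfy $\|\varphi g\|_{\mathcal{C}^{2+\alpha,1+\frac\alpha2}}\leq C\kappa^2$, and compatibility holds at $t=0$ because $\varphi$ vanishes near $0$ (or has controlled derivatives there).

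The bootstrap runs as follows.
\begin{itemize}
\item The $L^\infty$ bound, together with the De Giorgi--Nash/Krylov--Safonov theory or the $W^{2,1}_p$ estimates of \cite[Chapter 4]{ladyzenskaja_linear_1968} with $p$ large, gives a bound on $u$ in $\mathcal{C}^{1+\beta,\frac{1+\beta}{2}}$.
\item Hence $F(u)$ is bounded in $\mathcal{C}^{\alpha,\frac\alpha2}$ by a constant depending only on $M$ (as $F\in\mathcal{C}^1$ with $\|F'\|\leq M$ on $[0,R]$).
\item The Schauder estimate \cite[Chapter 4, Theorem 5.2]{ladyzenskaja_linear_1968}, applied to $\partial_tu-\nabla\cdot(A\nabla u)+B\cdot\nabla u=-F(u)$, then yields the $\mathcal{C}^{2+\alpha,1+\frac\alpha2}$ bound $C_1(\Omega,T,A,B,M,\kappa)$.
\end{itemize}

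\textbf{Statement (ii).} Reverse time via $\widetilde v(x,t)=v(x,\widetilde T-t)$, which gives a forward linear problem with zero initial data and coefficient $q$. By (i) applied to $u_1,u_2$, and since $F_1\in\mathcal{C}^1$, the coefficient $q$ is bounded. To use Schauder theory we need $q\in\mathcal{C}^{\alpha,\frac\alpha2}$, which requires $F_1'$ to be Hölder. \emph{This is the main obstacle}, since $F_1$ is only $\mathcal{C}^1$.

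I would try the following to get around it.
\begin{itemize}
\item First obtain a $W^{2,1}_p$ bound, which needs only $q\in L^\infty$, hence $v\in\mathcal{C}^{1+\beta,\frac{1+\beta}2}$.
\item If the full $\mathcal{C}^{2+\alpha}$ norm is genuinely required, use $qv$ as a bounded right-hand side together with continuity of $q$ (uniform continuity of $F_1'$ on $[0,R]$, since $u_j$ are uniformly Hölder by (i)). Alternatively, accept a constant depending on the modulus of continuity, which is what "$q\in\mathcal{C}^1$" in the paper implicitly presumes.
\end{itemize}
The estimate is linear in $h$ because the problem is linear. The compatibility conditions $h(\cdot,\widetilde T)=\partial_th(\cdot,\widetilde T)=0$ ensure the first-order compatibility needed for the global estimate.

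\textbf{Statement (iii).} I would argue by duality. For any $\psi\in L^\infty(\Omega\times(0,\widetilde T))$ with $|\psi|\leq1$, let $w$ solve the forward problem
\[
\partial_tw-\nabla\cdot(A\nabla w)+B\cdot\nabla w+qw=\psi,\qquad w=0 \text{ on } \partial\Omega,\qquad w(\cdot,0)=0.
\]
Green's identity, as in \eqref{main equality}, gives
\[
\int\!\!\int v\psi=-\int_0^{\widetilde T}\!\!\int_{\partial\Omega}\partial_{\nu_A}w\,h .
\]
The $W^{2,1}_p$ estimates with $p>d+2$, plus embedding, bound $\|\partial_{\nu_A}w\|_{L^\infty}\leq C\|\psi\|_{L^\infty}\leq C$, with $C$ depending on $M$ through $\|q\|_\infty\leq M$. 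Taking the supremum over $\psi$ gives $\|v\|_{L^1}\leq C_3\|h\|_{L^1(\partial\Omega\times(0,\widetilde T))}$.

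Alternatively, for $h\geq0$ one can use $0\leq v\leq v_0$ from Lemma~\ref{lem: v_M leq v leq v_0} together with the same duality argument for $v_0$ with $\psi\equiv1$, which gives a constant independent of $q$.
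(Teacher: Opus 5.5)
\textbf{Part (ii) has a gap.} You are right that the Schauder estimate needs a bound on $q$ in $\mathcal{C}^{\alpha,\frac\alpha2}$, and that $\|q\|_{\mathcal{C}}\le M$ does not supply one. The paper's one-line proof of (ii) rests on exactly that insufficient observation. Its earlier claim $q\in\mathcal{C}^1$ would need $F_1\in\mathcal{C}^2$, and even then $C_2$ would depend on more than $M$.

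Your proposed repairs do not close the gap. Continuity of $q$, or a constant depending on its modulus of continuity, gives no $\mathcal{C}^{2+\alpha,1+\frac\alpha2}$ bound, because Schauder theory needs H\"older control of the term $qv$. In fact (ii) is false under $F_j\in\mathcal{C}^1$ alone:
\begin{itemize}
\item If $v\in\mathcal{C}^{2+\alpha,1+\frac\alpha2}$, then $qv=\partial_tv+\nabla\cdot(A\nabla v)+\nabla\cdot(Bv)\in\mathcal{C}^{\alpha,\frac\alpha2}$, since $A,B\in\mathcal{C}^{1+\alpha}$.
\item By the strong maximum principle, $v>0$ in $\Omega\times(0,\widetilde T)$ when $h\ge0$ is positive on $S\times(0,\widetilde T)$. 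So $q=qv/v$ would be locally H\"older there.
\item Now take $F_1=F_2$, so that $q=F_1'(u_1)$, with $F_1'\ge0$ continuous but not H\"older on any interval, and take $\widetilde T>T_1$.
\item For $t\in(T_1,\widetilde T)$ the boundary values $\varphi(t)g$ are not constant, so $\nabla u_1(\cdot,t)\neq0$ somewhere in $\Omega$. Along the gradient direction, $u_1$ is bi-Lipschitz onto an interval of values, so $q$ inherits the non-H\"older behaviour of $F_1'$. This contradicts the previous point.
\end{itemize}

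There are two honest repairs.
\begin{itemize}
\item Assume $F_j\in\mathcal{C}^{1+\alpha}([0,R])$ with a bound $M'$. Then (i) gives $\|q\|_{\mathcal{C}^{\alpha,\frac\alpha2}}\le C(M',C_1)$, and the Schauder estimate yields (ii) with $C_2$ depending on $M'$.
\item Keep $F_j\in\mathcal{C}^1$ and replace (ii) by what your first bullet actually gives. Lift $h$ and apply $W^{2,1}_p$ estimates, which need only $\|q\|_{L^\infty}\le M$, to obtain $\|v\|_{\mathcal{C}^{1+\beta,\frac{1+\beta}2}}\le C\|h\|_{\mathcal{C}^{2+\alpha,1+\frac\alpha2}}$.
\end{itemize}
The weaker bound is all the stability proof needs. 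Case 1 only uses $v\ge\frac12$ on a box of size $c\varepsilon$ around $(x',t_0)$. A H\"older modulus in $t$ still gives this after shrinking $c$, so the exponents are unchanged.

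\textbf{Parts (i) and (iii).} Your (i) is essentially the paper's bootstrap. The maximum principle gives $0\le u\le R$. A $\mathcal{C}^{1+\beta,\frac{1+\beta}2}$ bound then follows from the bounded right-hand side $-F(u)$; the paper lifts the data with $\Phi$ and cites Friedman, while you use $W^{2,1}_p$. Finally $F(u)\in\mathcal{C}^{\alpha,\frac\alpha2}$ feeds the Schauder estimate.

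Your (iii) takes a different but valid route. The paper uses the comparison $0\le v\le v_0$ of Lemma~\ref{lem: v_M leq v leq v_0}, which needs $h\ge0$. It writes $v_0$ through the Poisson kernel $-\partial_{\nu_y}U$ and integrates It\^o's bound $\int_\Omega-\partial_{\nu_y}U\,dx\le C(\tau-t)^{-1/2}e^{C(\tau-t)}$. You instead use Green's identity against the forward problem with source $\psi$, $|\psi|\le1$, together with $\|\partial_{\nu_A}w\|_{L^\infty}\le C\|\psi\|_{L^\infty}$ from $W^{2,1}_p$ with $p>d+2$. This avoids fundamental-solution estimates and also covers sign-changing $h$. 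Your second variant, duality for $v_0$ with $\psi\equiv1$, recovers the paper's $q$-independent constant.
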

\begin{proof}
We prove the three estimates separately. For {(i)}, let $\Phi$ be the solution of the following linear problem
$$
\left\{\begin{aligned}
      \partial_t\Phi - \nabla \cdot (A\nabla \Phi) + {B} \cdot \nabla \Phi &= 0, & & \text{in } \Omega \times (0,T),\\
      \Phi &= \varphi g, & & \text{on } \partial\Omega \times (0,T),\\
      \Phi(\cdot,0) &= 0, && \mbox{in }\Omega.
  \end{aligned}\right.
  $$
By the classical H\"older \textit{a priori} regularity estimate \cite[Chapter 4, Theorem 5.2]{ladyzenskaja_linear_1968}, we have 
$$
\|\Phi\|_{\mathcal{C}^{2+\alpha,1+\frac\alpha2}(\overline{\Omega}\times [0,T])} \leq C\|\varphi g\|_{\mathcal{C}^{2+\alpha,1+\frac\alpha 2}(\partial\Omega\times [0,T])} \leq C,
$$
  with $C = C(\Omega,T,A,{B},\kappa)>0$. Then, the function $w = u - \Phi$ solves
  $$
\left\{\begin{aligned}
      \partial_t w - \nabla \cdot (A\nabla w) + {B}\cdot \nabla w &= F_w, & & \text{in } \Omega \times (0,T),\\
      w &= 0, & & \text{on } \partial\Omega \times (0,T),\\
      w(\cdot,0) &= 0, & & x \in \Omega,
  \end{aligned}\right.
  $$
with $F_w=-F(u) - \partial_t\Phi + \nabla \cdot (A\nabla \Phi)- {B} \cdot \nabla \Phi$.  By the  H\"older \textit{a priori} regularity estimate \cite[Theorem 4.1]{friedman2008partial}, we have for any $\delta \in (0,1)$, 
  $$
  \|w\|_{\mathcal{C}^{1+\delta, \frac{1+\delta}{2}}(\overline{\Omega}\times [0,T])} \leq C\|F_w\|_{\mathcal{C}(\overline{\Omega}\times[0,T]}= C\|-F(u) - \partial_t\Phi + \nabla \cdot (A\nabla \Phi)- {B} \cdot \nabla \Phi \|_{\mathcal{C}(\overline{\Omega}\times [0,T])}.
  $$
By the weak maximum principle, we derive that $0\leq u(x,t) \leq R$ on $\overline{\Omega}\times [0,T]$. By combining these two estimates, it follows that
\begin{equation}
\label{C1+delta, delta est. for u}
\begin{aligned}
\|u\|_{\mathcal{C}^{1+\delta, \frac{1+\delta}{2}}(\overline{\Omega}\times [0,T])} &\leq \|w\|_{\mathcal{C}^{1+\delta, \frac{1+\delta}{2}}(\overline{\Omega}\times [0,T])} + \|\Phi\|_{\mathcal{C}^{1+\delta, \frac{1+\delta}{2}}(\overline{\Omega}\times [0,T])}\\
& \leq C(1+\|F(u)\|_{\mathcal{C}(\overline{\Omega}\times [0,T])} + \|\Phi\|_{\mathcal{C}^{2+\alpha,1+\frac\alpha 2}(\overline{\Omega}\times [0,T])}
)\\
&\leq C(1+ \|F\|_{\mathcal{C}([0,R])} ) \leq C.
\end{aligned}
\end{equation}
Now, by regarding $u$ as the solution to the linear problem
\begin{equation*}
\left\{\begin{aligned}
    \partial_t \zeta  - \nabla \cdot (A \nabla \zeta ) + {B} \cdot \nabla \zeta &= -F(u), & & \text{in } \Omega \times (0,T) ,\\
    \zeta &= \varphi g, && \text{on } \partial\Omega \times (0,T),\\
    \zeta(\cdot,0) &= 0, & & \mbox{in }\Omega,
\end{aligned}\right.
\end{equation*}
we may apply again the classical H\"older \textit{a priori} estimate and conclude 
$$
\|u\|_{\mathcal{C}^{2+\alpha,1+\frac \alpha 2}(\overline{\Omega}\times [0,T])}\leq C (\|F(u)\|_{C^{\alpha,\frac \alpha 2}(\overline{\Omega}\times [0,T])} + \|\varphi g\|_{\mathcal{C}^{2+\alpha,1+\frac\alpha 2}(\partial\Omega\times [0,T])} ).
$$
Next, since $F(0) = 0$, cf. condition \eqref{asmp. on F}, we have \begin{equation}
\label{eqn1}
\|F(u)\|_{\mathcal{C}^{\alpha,\frac \alpha 2}(\overline{\Omega}\times [0,T])} \leq \|F\|_{\mathcal{C}^1([0,R])} \|u\|_{\mathcal{C}^{\alpha,\frac \alpha 2}(\overline{\Omega}\times [0,T])}.
\end{equation}
Hence, the assertion (i) follows by taking $\delta = \alpha$ in the estimate \eqref{C1+delta, delta est. for u} and combining with the estimate \eqref{eqn1}.

The assertion {(ii)} follows directly from the observation $\|q\|_{\mathcal{C}(\overline{\Omega}\times [0,T])} \leq \|F\|_{\mathcal{C}^1([0,R])} \leq M$ as well as the classical H\"older \textit{a priori} estimate.

Last, for {(iii)}, let $U(y,\tau,x,t)$, $(x,y,t)\in\overline{\Omega}\times\overline{\Omega}\times(0,\widetilde{T}),\ \tau\in(t,\widetilde{T})$, be the fundamental solution to the backward problem
$$
\left\{\begin{aligned}
      -\partial_t v -\mathcal{L}v &= 0, & & \text{in } \Omega \times (0,\widetilde{T}), \\
    v &= 0, & &\text{on }  \partial\Omega \times (0,\widetilde{T}), \\
    v(\cdot,\widetilde{T}) &= 0,&  &\mbox{in }\Omega,
\end{aligned}\right.
$$
with $\mathcal{L}v = \nabla \cdot (A \nabla v) + \nabla \cdot ({B}v)$ (see \cite[Section 7]{ito1992diffusion} for more details). We fix $\partial_{\nu_y} U$ defined by
$$\partial_{\nu_y} U(y,\tau,x,t)=\nabla_y U(y,\tau,x,t)\cdot\nu(y), \quad (x,y,t)\in\Omega\times\partial\Omega\times(0,\widetilde{T}),\ \tau\in(t,\widetilde{T}).$$
In view of \cite[p. 53]{ito1992diffusion}, the function $\partial_{\nu_y} U$ satisfies the following estimate 
\begin{equation}\label{eqn:est-fund}
\int_\Omega -\partial_{\nu_y} U(y,\tau,x,t) \d \sigma(y) \leq C(\tau-t)^{-\frac12}\exp(C(\tau-t)),\quad (x,t)\in\Omega\times(0,\widetilde{T}),\ \tau\in(t,\widetilde{T}), 
\end{equation}
for some $C=C(\Omega,T,A,{B})>0$.  Next, let $v_0$ be defined as in Lemma \ref{lem: v_M leq v leq v_0}. By \cite[Theorem 9.2*]{ito1992diffusion}, $v_0$ admits the following representation
$$
v_0(x,t) = \int_t^{\widetilde{T}} \int_{\partial\Omega} - \partial_{\nu_y} U (y,\tau,x,t) h(y,\tau) \d \sigma(y) \d\tau,\quad (x,t)\in\Omega\times(0,\widetilde{T}).
$$
Hence, by the positivity of $v$, Lemma \ref{lem: v_M leq v leq v_0}, and also applying the estimate \eqref{eqn:est-fund} for $U$, we have
\begin{align*}
\|v\|_{L^1(\Omega \times (0,\widetilde{T}))} &= \int_0^{\widetilde{T}}\int_\Omega v(x,t) \d x\d t \leq \int_0^{\widetilde{T}}\int_\Omega v_0(x,t) \d x\d t \\
& \leq \int_0^{\widetilde{T}}\int_\Omega\int_t^{\widetilde{T}} \int_{\partial\Omega} - \partial_{\nu_y} U (y,\tau,x,t) h(y,\tau) \d \sigma(y) \d\tau \d x\d t \\
& \leq \int_0^{\widetilde{T}} \int_{\partial\Omega} h(y,\tau) \int_0^\tau \int_{\Omega} - \partial_{\nu_y} U (y,\tau,x,t) \d x\d t\d \sigma(y)\d\tau \\
& \leq \int_0^{\widetilde{T}} \int_{\partial\Omega} h(y,\tau) \int_0^\tau C(\tau-t)^{-\frac 12} \exp(C(\tau-t)) \d t\d \sigma(y)\d\tau \\
& \leq \int_0^{\widetilde{T}} \int_{\partial\Omega} Ch(y,\tau) \int_0^{\widetilde{T}} \xi^{-\frac 12} \exp(C\xi) \d\xi \d \sigma(y) \d\tau \\
& \leq C_3 \|h\|_{L^1(\partial\Omega \times (0,\widetilde{T}))}.
 \end{align*}
This completes the proof of the assertion {(iii)} and also the lemma.
\end{proof}

\section{Proof of stability estimates}
\label{sec:stab-proof}

Using Lemmas \ref{lem: v_M leq v leq v_0} and \ref{lem:schauder-type est}, we can prove Theorem \ref{thm:stability}.

\begin{proof}[Proof of Theorem \ref{thm:stability}]

Following the proof of Theorem \ref{thm:uniqueness(new)}, let $F = F_2-F_1$ and let $v$ be the solution to problem 
\eqref{defn of v}, with $\widetilde{T}$ and $h$ to be fixed below. Let  $s_1,\ldots,s_N$ be the $N$ points where $F$ changes sign, and denote $s_0 = 0$ and $s_{N+1} = R$. We define
$$|F(r_j)| = \sup_{s\in [s_j,s_{j+1}]}|F(s)|, \quad t_j = f^{-1}(r_jR^{-1}),\quad  \widetilde{T}_j =f^{-1}(s_{j+1}R^{-1}).$$ 
First we fix  $\widetilde{T} = \widetilde{T}_0$ and assume without loss of generality that $F \geq 0 $ on $[s_0,s_1]$. By the weak maximum principle applied to $u_2$, we derive the following estimate
$$
0= s_0 = \min_{(x,t) \in \partial{\Omega}\times [0,\widetilde{T}_0]}\varphi(t)g(x) \leq u_2(x,t) \leq \max_{(x,t)\in \partial\Omega \times [0,\widetilde{T}_0]} \varphi(t)g(x) = s_1,\quad (x,t)\in \overline{\Omega}\times [0, \widetilde{T}_0],
$$
which implies $F(u_2(x,t)) \geq 0$, $(x,t)\in\Omega \times [0,\widetilde{T}_0]$. Now, let 
$$0<\varepsilon <m_0:=\min\{t_0,\widetilde{T}_0-t_0\}$$ 
be arbitrary, and let $x' \in \partial\Omega$ be any point such that $g(x') = R$. Then, consider an $\epsilon$ - neighborhood of the point $(x',t_0)$:
$$
\mathcal{O}_{x',\epsilon} \times \mathcal{T}_{0,\varepsilon} =  \{x \in \overline{\Omega}:|x-x' |<\varepsilon\} \times (t_0-\varepsilon,t_0+\varepsilon),
$$
where $|\cdot|$ denotes the Euclidean norm of vectors in $\mathbb{R}^d$. Note that $\ \mathcal{T}_{0,\varepsilon}\subset [0,\widetilde{T}_0]$ by construction. Now, if $(x,t) \in  \mathcal{O}_{x',\epsilon} \times \mathcal{T}_{0,\varepsilon}$, then by the triangle inequality and Lemma   \ref{lem:schauder-type est} (i), we have
\begin{equation}
    \begin{aligned}
F(u_2(x,t)) &\geq F(u_2(x',t_0)) - |F(u_2(x',t_0))-F(u_2(x,t))| \\
&\geq F(\varphi(t_0)g(x')) - \|F\|_{\mathcal{C}^1([0,R])} \|u_2\|_{\mathcal{C}^1(\overline{\Omega}\times[0,T])} |(x',t_0)-(x,t)| \\
&\geq F(r_0) - \sqrt{2}M C_1\varepsilon.
\end{aligned}\label{eqn: F>=F(r_0) - C*eps}
\end{equation}
Hence, by the nonnegativity of $F(u_2)$ and $v$ on $\overline{\Omega}\times [0,\widetilde{T}_0]$, we have 
\begin{align}
 \int_0^{\widetilde{T}_0}\int_\Omega F(u_2(x,t)) v(x,t) \d x \d t &\geq\int_{\mathcal{T}_{0,\varepsilon}}\int_{\mathcal{O}_{x',\varepsilon}} F(u_2(x,t)) v(x,t) \d x \d t \nonumber\\
 &\geq (F(r_0) - \sqrt{2}MC_1\varepsilon) \int_{\mathcal{T}_{0,\varepsilon}} \int_{\mathcal{O}_{x',\varepsilon}} v(x,t) \d x\d t. \label{eqn2}
\end{align}
Meanwhile, since $h\geq0$ on $S \times (0,\tilde{T}_0)$, there holds
\begin{equation}\label{eqn:basic-ineq}
\int_0^{\widetilde{T}_0 }\int_{\partial\Omega} (\partial_{\nu_A} u_1 - \partial_{\nu_A} u_2) h(x,t) \d \sigma(x) \d t
\leq \sup_{S\times (0,T)} |\partial_{\nu_A} u_1 - \partial_{\nu_A} u_2|\int_0^{\widetilde{T}_0}\int_S  h(x,t) \d \sigma(x) \d t  .
\end{equation}
Let 
\begin{equation*}
    \mu :=\sup_{S\times (0,T)} |\partial_{\nu_A} u_1 - \partial_{\nu_A} u_2|.
\end{equation*}
Combining the estimate \eqref{eqn2} and \eqref{eqn:basic-ineq} with the identity \eqref{main equality} gives
\begin{equation}
\label{stab. est main ineq.} 
(F(r_0) - \sqrt{2}MC_1\varepsilon) \int_{\mathcal{T}_{0,\varepsilon}} \int_{\mathcal{O}_{x',\varepsilon}} v(x,t) \d x\d t
\leq \mu \int_0^{\widetilde{T}_0}\int_S  h(x,t) \d \sigma(x) \d t  .
\end{equation}
From now on, we analyze the two cases depending on whether there exists a point $x' \in S$ such that $g(x') = R$. 

\medskip\noindent\textbf{Case 1: There exists $x' \in S$ such that $g(x') = R$.}  First, choose $x'\in S$ such that 
$$
\text{dist}(x',\partial S) = d^*= \min_{\substack{x \in S, g(x) = R}} \text{dist}(x,\partial S),
$$
and additionally assume $\varepsilon < d^*$ so that $\mathcal{O}_{x',\varepsilon}\cap S = \mathcal{O}_{x
',\varepsilon} \cap \partial\Omega$. Next, fix $h\in \mathcal{C}^\infty(\partial\Omega \times [0,\widetilde{T}_0])$ to be a function supported only on $(\mathcal{O}_{x', d^*}\cap S) \times [0,\widetilde{T}_0]$ satisfying the compatibility condition $h(x,\widetilde{T}_0) \equiv 0$ with $h(x',t_0) = 1$. Moreover, fix $\|h\|_{\mathcal{C}^{2+\alpha,1+\frac\alpha 2}(\partial\Omega \times [0,\widetilde{T}_0])} = \tilde{C} = \tilde{C}(m_0,d^*)>0$. Now, fix $\delta>0$ to be arbitrary small. For any $(x,t)$ satisfying $|x-x'|<\delta$ and $|t-t_0|<\delta$, applying the mean value theorem and Lemma \ref{lem:schauder-type est} (ii), we obtain
\begin{align*}
|v(x,t) -v(x',t_0) |
&\leq  \|v\|_{C^{1,1}(\overline{\Omega}\times [0,\widetilde{T}_0])} (|x-x'|+|t-t_0|) \\
&\leq 2C_2\delta\|h\|_{C^{2+\alpha,1+\frac\alpha2}(\partial\Omega\times [0,\widetilde{T}_0])} 
\leq 2\tilde{C}C_2
\delta.
\end{align*}
Thus, it follows from the triangle inequality that 
$$
\begin{aligned}
v(x,t) \geq v(x',t_0) - |v(x',t_0) - v(x,t)| \geq 1 - 2\tilde{C}C_2\delta.
\end{aligned}
$$
Hence, by choosing $c < \min\{1,(4\tilde{C}C_2)^{-1}\}$ and taking $\delta = c \varepsilon$ we have $v(x,t) \geq 1/2$ on $\mathcal{O}_{x',\delta} \times \mathcal{T}_{0,\delta}$ and also 
$\mathcal{O}_{x',\delta}\times \mathcal{T}_{0,\delta} \subset \mathcal{O}_{x',\varepsilon}\times \mathcal{T}_{0,\varepsilon}$. Thus we arrive at the following lower bound
$$
\int_{\mathcal{T}_{0,\varepsilon}} \int_{\mathcal{O}_{x',\varepsilon}} v(x,t) \d x\d t \geq \int_{\mathcal{T}_{0,\delta}} \int_{\mathcal{O}_{x',\delta}} v(x,t) \d x \d t \geq \frac 12 \int_{\mathcal{T}_{0,c\epsilon}} \int_{\mathcal{O}_{x',c\epsilon}} 1 \d x \d t  = \frac {C_\Omega c^{d+1}} {2} \varepsilon^{d+1} ,
$$
where the constant $C_\Omega>0$  depends only on $\Omega$. 
This estimate, the inequality \eqref{stab. est main ineq.} and the estimate $\|h\|_{L^1(S\times (0,\widetilde{T}))} \leq \tilde{C}_{\Omega} \tilde{C}$ imply
\begin{equation}
    F(r_0)\leq \frac{2\tilde{C}_\Omega \tilde{C}}{C_\Omega c^{d+1}}\varepsilon^{-d-1} \mu +\sqrt{2}MC_1 \varepsilon.
    \label{eqn:F(r_0)<=eps^-1+eps}
\end{equation}
We claim that the estimate \eqref{eqn:F(r_0)<=eps^-1+eps} implies 
\begin{equation}
\label{step 1 case 1}
F(r_0) \leq C\mu^{\frac{1}{d+2}}, 
\end{equation}
with $C = C(\Omega,T,A,{B},M,\kappa,d^*)>0$. If $\mu = 0$, we obtain \eqref{step 1 case 1} directly by sending $\varepsilon$ to 0 in \eqref{eqn:F(r_0)<=eps^-1+eps}. If $\mu^{\frac{1}{d+2}} < \min\{m_0,d^*\} $, 
then we may choose $\varepsilon = \mu^{\frac{1}{d+2}}$, and the estimate \eqref{eqn:F(r_0)<=eps^-1+eps} yields $F(r_0) \leq C \mu^{\frac{1}{d+2}}$, where  all the constants in \eqref{eqn:F(r_0)<=eps^-1+eps} are collected into $C$. Last, suppose $\mu^{\frac{1}{d+2}} \geq \min\{m_0,d^*\}$. In the case $\mu^{\frac{1}{d+2}} \geq m_0$, we choose $\varepsilon = \tilde{c}m_0$, where $\tilde{c}=\tilde{c}(d^*)>0$ is such that $\tilde{c}m_0 \leq \min\{m_0,d^*\}$. Then, from \eqref{eqn:F(r_0)<=eps^-1+eps}, we have 
\begin{equation}
\label{F(r_0) <= Cmu case}
F(r_0) \leq C\left( \frac{\mu}{\tilde{c}^{d+1}m_0^{d+1}} + \tilde{c}m_0\right) \implies m_0^{d+1} F(r_0) \leq C(\mu + m_0^{d+2}) \leq C \mu. 
\end{equation}
Meanwhile,  by the mean value theorem,
$$
t_0 = f^{-1}(r_0R^{-1}) - f^{-1}(s_0R^{-1}) \geq R^{-1}(r_0-s_0)\min_{t \in (T_1,T_2)} (f^{-1})'(t).
$$
Then, by the mean value theorem again and the condition $F(s_0)=0$ from \eqref{asmp. on F}, we have
$$
\frac{F(r_0)}{r_0-s_0}=\frac{F(r_0)-F(s_0)}{r_0-s_0}\leq  \|F\|_{\mathcal{C}^1([0,R])} \leq M.
$$
Combining the last two estimates as well as using the elementary identity $(f^{-1})'(y) = (f'(f^{-1}(y)))^{-1}$, we have
$$
\frac{F(r_0)}{t_0} \leq \frac{MR}{\min_{t \in (T_1,T_2)} (f^{-1})'(t)} \leq MR\|f\|_{\mathcal{C}^1([T_1,T_2])} \leq MR \kappa.
$$
By similarly considering $\frac{F(r_0)}{\widetilde{T}_0-t_0}$ and then raising the inequality to the power of $d+1$, we deduce
$$
\frac{[F(r_0)]^{d+1}}{m^{d+1}_0} \leq (MR \kappa)^{d+1}.
$$
Multiplying this inequality with \eqref{F(r_0) <= Cmu case} yields
$$
[F(r_0)]^{d+2} \leq C\mu,
$$
which directly implies the claim \eqref{step 1 case 1}. Lastly, in the case $\mu^{\frac{1}{d+2}} < m_0$ but $\mu^{\frac{1}{d+2}} \geq d^*$, we have 
$$
F(r_0) \leq M \frac{\mu^{\frac{1}{d+2}}}{d^*} \leq C\mu^{\frac{1}{d+2}}. 
$$
Hence the claim \eqref{step 1 case 1} holds in all cases. Next we prove the estimate \eqref{stab. est. for x' in S} by iteration. Now fix $\widetilde{T} = \widetilde{T}_1$. Since $s_0\leq u_2\leq s_2$ in the region $\Omega \times (0,\widetilde{T}_1)$ by the maximum principle, using the identity
 \eqref{main equality} and the triangle inequality, we get
\begin{align*} 
\left\vert\int_{\{s_1\leq u_2\leq s_2\}} F(u_2(x,t)) v(x,t) \d x \d t\right\vert \leq & \left\vert \int_0^{\widetilde{T}_1} \int_{S} (\partial_{\nu_A} u_1-\partial_{\nu_A} u_2) h \d\sigma(x) \d t \right\vert \\
 &+ \left\vert\int_{\{s_0\leq u_2\leq s_1\}} F(u_2(x,t)) v(x,t) \d x \d t  \right\vert.
\end{align*}
Next, note that $F(s) \leq 0$ for  $s_1 \leq s \leq s_2$ by the assumption that $F$ changes sign at $s_1$ and $F(s)\geq 0$ for $s_0\leq s\leq s_1$. Thus, by the positivity of $v$, we have
$$
\int_{\{s_1\leq u_2\leq s_2\}} |F(u_2(x,t))| v(x,t) \d x \d t = \left\vert \int_{\{s_1\leq u_2\leq s_2\}} F(u_2(x,t)) v(x,t) \d x \d t\right\vert ,
$$
and moreover, since $F(s) \leq F(r_0)$ for $s_0\leq s \leq s_1$, from the estimate \eqref{step 1 case 1}, we deduce 
\begin{align*}
\int_{\{s_1\leq u_2\leq s_2\}} |F(u_2(x,t))| v(x,t) \d x \d t
\leq& \mu \int_0^{\widetilde{T}_1} \int_{S}  h \d\sigma(x) \d t +  C\mu^{\frac{1}{d+2}}\int_{\{s_0\leq u_2\leq s_1\}} v(x,t) \d x \d t.
\end{align*}
Fix $m_1 = \min \{t_1-\widetilde{T}_0,\widetilde{T}_1 - t_1\}$ and fix $0< \epsilon < \min\{m_1,d^*\}$. We also set $\mathcal{T}_{1,\varepsilon} = [t_1-\varepsilon,t_1+\varepsilon]$. We choose $h \in \mathcal{C}^\infty(\partial\Omega \times [0,\widetilde{T}_1])$ to be a  function supported only on $(\mathcal{O}_{x', d^*}\cap S) \times [0,\widetilde{T}_1]$ satisfying the compatibility condition $h(x,\widetilde{T}_1) \equiv 0$ and also satisfying $h(x',t_1) = 1$. Similarly, suppose that $\|h\|_{\mathcal{C}^{2+\alpha,1+\frac\alpha2}(\partial\Omega\times [0,\widetilde{T}_1])} = \tilde{C} = \tilde{C}(m_1,d^*) > 0$. Then, repeating the preceding argumentation yields 
$$
\int_{\{s_1\leq u_2\leq s_2\}}|F(u_2(x,t))|v(x,t) \d x\d t\geq \int_{\mathcal{T}_{1,\varepsilon}} \int_{\mathcal{O}_{x',\varepsilon}}|F(u_2(x,t))|v(x,t) \d x\d t\geq C(|F(r_1)| - \varepsilon)\varepsilon^{d+1}.
$$
with $C = C(\Omega,T,A,{B},M,\kappa,m_1,d^*)>0$. Meanwhile, by Lemma \ref{lem:schauder-type est}(iii), we have 
 $$
    \int_{\{s_0 \leq u_2 \leq s_1 \}}v(x,t) \d x\d t \leq C_3\|h\|_{L^1(\partial\Omega \times (0,\widetilde{T}))} \leq C_3\tilde{C}_\Omega\tilde{C}.
$$ 
Hence, we derive  
$$
|F(r_1)|\leq C \varepsilon^{-d-1}\mu ^{\frac {1}{d+2}} + \varepsilon.
$$
Hence, by choosing $\varepsilon$ appropriately (as in the justification of \eqref{step 1 case 1}), we obtain 
$$
|F(r_1)| \leq C\mu^{\frac{1}{(d+2)^2}} ,
$$
with $C = C(\Omega,T,A,{B},M,\kappa,d^*)>0$. By iterating this process, we obtain the estimate \eqref{stab. est. for x' in S} in Case 1. 

\noindent\textbf{Case 2: There does not exist $x' \in S$ such that $g(x') = R$.}  
Let $v_M$ be defined as in Lemma \ref{lem: v_M leq v leq v_0}. Now, let $S' \subset S$ be the largest open connected subset of $S$. By further assuming that $\varepsilon < |S'|/4$, we can find $\tilde{S} \subset S'$ such that $\tilde{S} \cap \mathcal{O}_{x',\varepsilon} = \emptyset$. Next, let $h \in \mathcal{C}^\infty(\partial\Omega \times [0,\widetilde{T}_0])$ with $h \not\equiv 0$ such that $h$ is supported only on $\tilde{S} \times [0,\widetilde{T}_0]$ and satisfies the compatibility conditions $h(x,\widetilde{T}_0) = 0$. Let $v_M$ be defined in Lemma \ref{lem: v_M leq v leq v_0}. Now, by Hopf's lemma (see e.g. \cite[Proposition 1.48]{choulli2009introduction}), we have
$$
\frac{\partial v_M}{\partial\nu} (x,t)<0, \quad (x,t) \in  (\partial\Omega \setminus \tilde{S})\times (0,\widetilde{T}_0).
$$ 
Using boundary normal coordinates, we can conclude 
\begin{equation}
    \label{eqn3}
    v_M(x,t) \geq Cd(x,\partial\Omega), \quad (x,t) \in \mathcal{O}_{x',\varepsilon}\times \mathcal{T}_{0,\varepsilon},
\end{equation}
with $C = C(\Omega,T,A,{B},M)>0$. By Lemma \ref{lem: v_M leq v leq v_0} and the estimate \eqref{eqn3}, we find
$$
\int_{\mathcal{T}_{0,\varepsilon}} \int_{\mathcal{O}_{x',\varepsilon}} v(x,t) \d x \d t \geq \int_{\mathcal{T}_{0,\varepsilon}}\int_{\mathcal{O}_{x',\varepsilon}} v_M(x,t) \d x \d t \geq C\int_{\mathcal{T}_{0,\varepsilon}} \int_{\mathcal{O}_{x',\varepsilon}}  d(x,\partial\Omega) \d x\d t \geq C\varepsilon^{d+2}.
$$
This estimate, \eqref{stab. est main ineq.} and the inequality  $\|h\|_{L^1(S\times (0,\widetilde{T}))}\leq C$ imply
\begin{equation}
    F(r_0) \leq C \left( \varepsilon^{-d-2} \mu + \varepsilon \right).\label{eqn:F(r_0_<=eps^-2+eps}
\end{equation}
Similar to  Case 1, the estimate \eqref{eqn:F(r_0_<=eps^-2+eps} implies
$$
F(r_0) \leq C\mu ^{\frac {1}{d+3}}.
$$
Then, the desired result \eqref{stab. est. for x' not in S} follows by the iteration argument. 
\end{proof}

Last, we prove Corollary \ref{cor:improved-stability}.

\begin{proof}[Proof of Corollary \ref{cor:improved-stability}]
Let $s,s'\in[0,R]$ with $s\neq s'$. Since $F\in \mathcal{C}^2([0,R])$, by the Taylor expansion with a Lagrangian error term, we have
\begin{equation}
\label{taylon exp.}
F(s) = F(s') + F'(s')(s-s') + \tfrac 12 F''(\xi) (s-s')^2,\quad \xi\in(\min(s,s'),\max(s,s') ) .
\end{equation}
 We define $s_0$, $r_0$, $x'$, $t_0$, $\mathcal{O}_{x',\varepsilon}$ and $\mathcal{T}_{0,\varepsilon}$ as in the proof of Theorem \ref{thm:stability}. Then, by taking $s' = u_2(x',t_0) = r_j$, we have by its construction $F'(r_j) = 0$, and we can derive for $(x,t) \in \mathcal{O}_{x',\varepsilon} \times \mathcal{T}_{0,\varepsilon}$ that 
\[
\begin{aligned}
F(u_2(x,t)) &\geq F(u_2(x', t_0)) - \left| F(u_2(x', t_0)) - F(u_2(x,t)) \right| \\
& \geq F(u_2(x',t_0)) - \tfrac 12 \|F\|_{\mathcal{C}^2([0,R])} |u_2(x',t_0)-u_2(x,t)|^2\\
&\geq F(\varphi(t_0) g(x')) - \tfrac 1 2
\|F\|_{\mathcal{C}^2([0,R])} \|u_2\|^2_{\mathcal{C}^1(\overline{\Omega} \times [0,T])} |(x', t_0) - (x,t)|^2 \\
&\geq F(r_0) - C^2_1M \varepsilon^2,
\end{aligned}
\]
which is an analogue of the estimate \eqref{eqn: F>=F(r_0) - C*eps}. Now we split the analysis into two cases like before. In Case 1, repeating the preceding argument, we arrive at
\begin{equation}
\label{eqn4}
    F(r_0)\leq C\left(\varepsilon^{-d-1} \mu + \varepsilon^2\right),\quad \mbox{with }\mu = \sup_{S\times (0,T)} |\partial_{\nu_A} u_1 - \partial_{\nu_A} u_2 |,
\end{equation}
with $\varepsilon < \min(m_0,d^*)$. We claim that the  estimate \eqref{eqn4} implies
\begin{equation}
\label{eqn4.5}
  F(r_0)\leq C\mu^{\frac {2}{d+3}}.
\end{equation}
The cases $\mu = 0$ and $\mu ^{\frac{1}{d+3}} < \min \{m_0,d^*\}$ can be treated similarly as in the proof of Theorem \ref{thm:stability}. In the case $\mu^{\frac{1}{d+3}} \geq m_0$, by choosing $\varepsilon = \tilde{c}m_0$ and repeating the derivation of \eqref{F(r_0) <= Cmu case}, we obtain
\begin{equation}
\label{eqn5}
m_0^{d+1}F(r_0) \leq C\mu.
\end{equation}
Meanwhile, by the mean value theorem,
$$
t_0 = f^{-1}(r_0R^{-1}) - f^{-1}(s_0R^{-1}) \geq R^{-1}(r_0-s_0)\min_{t \in (T_1,T_2)} (f^{-1})'(t).
$$
Then, by the Taylor expansion and noting $F(s_0)=0$, we have
$$
\frac{2F(r_0)}{(r_0-s_0)^2}=\frac{2(F(r_0)-F(s_0))}{(r_0-s_0)^2}\leq  \|F\|_{\mathcal{C}^2(\R)} \leq M.
$$
Combining the preceding two estimates yields
$$
\frac{F(r_0)}{t^2_0} \leq \frac M2\left(\frac{R}{\min_{t \in (T_1,T_2)} (f^{-1})'(t)} \right)^2\leq \frac {MR^2}{2}\|f\|^2_{\mathcal{C}^1([T_1,T_2])} \leq MR^2 \kappa^2.
$$
By similarly treating $\frac{F(r_0)}{\widetilde{T}_0-t_0}$, we can conclude 
$$
\frac{[F(r_0)]^\frac{d+1}{2}}{m^{d+1}_0} \leq (MR^2 \kappa^2)^{\frac{d+1}{2}}.
$$
Then using the estimate \eqref{eqn5}, we derive
$$
F(r_0)^{\frac{d+1}{2}+1} \leq C\mu,
$$
which directly implies the desired claim \eqref{eqn4.5}. Finally, the case  $\mu^{\frac{1}{d+3}}<m_0$ but $\mu^{\frac{1}{d+3}} \geq d^*$ can be treated similarly as in the proof of Theorem \ref{thm:stability}. Repeating the preceding iteration argument similarly yields \eqref{x in S, increased regularity on F}. The same augmentation can be applied to the proof of Case 2 and thus we obtain the estimate \eqref{x not in S, increased regularity on F}. This concludes the proof of the corollary. 
\end{proof}

\section{Numerical experiments and discussions}
\label{sec:numer}
In this section, we present several numerical experiments to illustrate the feasibility of numerical reconstruction, to complement the theoretical results in Section \ref{sec:main}. In the numerical experiments, we fix $T=1$, and let $\Omega \subset \mathbb{R}^2$ be the disk with a radius $0.5$ centred at the origin. Consider the following semilinear parabolic model:
$$
\left\{\begin{aligned}
\partial_t u - \Delta u + F(u) &= 0, && \text{in } \Omega \times (0,T), \\
u &= h, & & \text{on } \partial\Omega \times (0,T), \\
u(\cdot, 0) &= 0, && \text{in } \Omega.
\end{aligned}\right.
$$
The inverse problem is to recover the semilinear term $F(u)$ from partial boundary measurements, i.e., the flux $m(x,t) = \partial_\nu u(x,t)$ on a subset $S \subseteq \partial\Omega$, recorded over the time interval $(0,T)$.

\subsection{Numerical algorithm}

For the numerical reconstruction, the standard regularized output least-squares approach \cite{ItoJin:2015} reads:
$$
J_\lambda(F) = \underbrace{\frac12  \int_0^T \int_S |\partial_\nu u_F(x,t) - m(x,t)|^2 \, \mathrm{d}\sigma(x) \mathrm{d}t}_{J(F)} +  \frac{\lambda}{2}  \|F(u_F)\|_{L^2(0,T;L^2(\Omega))}^2,
$$
where $\lambda > 0$ is the regularization parameter, and $u_F$ denotes the state corresponding to the semilinear term $F(u)$. Note that the penalty term $\|F(u_F)\|_{L^2(0,T;L^2(\Omega))}^2$ views $F(u_F)$ as an unknown source, following the inverse source problem methodology in the stability analysis, and it does not penalize $F$ explicitly. To minimize the functional $J_\lambda$, we can employ a gradient-based iterative scheme. The Fr\'{e}chet derivative $D_FJ_\lambda(F)$ of the functional $J_\lambda(F)$ in any direction $G$ can be efficiently computed using an adjoint state $v$, which satisfies the following backward problem:
\begin{equation}
 \left\{   \begin{aligned}
-\partial_t v - \Delta v  &= 0, && \text{in } \Omega \times (0,T), \\
v(\cdot, T) &= 0, && \text{in } \Omega, \\
  v &= \chi_S (\partial_\nu u_F - m),& & \text{on } \partial\Omega \times (0,T),
\end{aligned}\right.\label{eqn:backw}
\end{equation}
where $\chi_S$ denotes the characteristic function of the set $S$. Let $w=D_F u_F\cdot G$ be the linearization (directional derivative) of $u_F$ in $F$ along the direction $G$, which satisfies
\begin{equation}\label{eqn:lin-problem} \left\{   \begin{aligned}
\partial_t w - \Delta w+F'(u_F)w+G(u_F)  &= 0, && \text{in } \Omega \times (0,T), \\
w(\cdot, 0) &= 0, && \text{in } \Omega, \\
  w &= 0,& & \text{on } \partial\Omega \times (0,T).
\end{aligned}\right.
\end{equation}
The notation $\langle\cdot,\cdot\rangle_{L^2(0,T;L^2(\Omega))}$ denotes the $L^2(0,T;L^2(\Omega))$ inner product, and likewise $\langle\cdot,\cdot\rangle_{L^2(0,T;L^2(S))}$.
By integration by parts, we obtain the directional derivative $D_FJ(F)G$:
$$
\begin{aligned}
&D_F J(F)G =  \langle v, \partial_\nu w \rangle_{L^2(0,T;L^2(S))}
=  \langle v, \Delta w \rangle_{L^2(0,T;L^2(\Omega))} +  \langle \nabla v, \nabla w \rangle_{L^2(0,T;L^2(\Omega))} \\
=&  \langle v, \partial_t w + G(u_F) + F'(u_F) w \rangle_{L^2(0,T;L^2(\Omega))} -  \langle \Delta v, w \rangle_{L^2(0,T;L^2(\Omega))}
=  \langle v, G(u_F) + F'(u_F) w \rangle_{L^2(0,T;L^2(\Omega))}.
\end{aligned}
$$
Next, consider the functional
$$
d(F,G) := \tfrac12 \| F(u_F) - G(u_G) \|_{L^2(0,T;L^2(\Omega))}^2,
$$
whose Fréchet derivative at $G=0$ satisfies the relation
$$
D_F d(\cdot, 0)(G) =  \langle F(u_F), G(u_F) + F'(u_F) w \rangle_{L^2(0,T;L^2(\Omega))}.
$$
Thus, the derivative of $J_\lambda$ takes the form
$$
D_F J_\lambda(F)G = \langle v + \lambda F(u_F), G(u_F) + F'(u_F) w \rangle_{L^2(0,T;L^2(\Omega))}.
$$
At the minimizer $F$ of $J_\lambda$, we know that for every $G$, we must have 
$$ \langle v + \lambda F(u_F), G(u_F) + F'(u_F) w \rangle_{L^2(0,T;L^2(\Omega))} = 0.$$ That is $$ \langle v + \lambda F(u_F), G(u_F) + F'(u_F) (D_F u_F\cdot G) \rangle_{L^2(0,T;L^2(\Omega))} = 0.$$
Hence, the first-order necessary condition reads 
$$v+\lambda F(u_F)\perp W_F:= \{G(u_F) + F'(u_F)(D_F u_F\cdot G):G\in C_c^\infty [0,R]\},$$
and we may develop a n iterative reconstruction algorithm based on the relation $v+\lambda F(u_F)=0$. This relation relates the adjoint source function $-v/\lambda$ in the spatial-time domain $\Omega\times (0,T)$ with the semilinear function $F$ on $[0,R]$. This observation directly motivates the algorithm using the piecewise regression \eqref{eqn:sub-cand} and the fixed point iteration \eqref{eqn:ITERATIONS}.

% We adopt a gradient-descent update, and at each step, structural constraints, e.g., $F(0)=0$ and $F'(s)\geq 0$, are enforced through projection. The numerical results are presented in Figs.~\ref{Fig:Method2_1} and~\ref{Fig:Method2_2}.

% \begin{algorithm}[htbp]
% \caption{Iterative reconstruction of the semilinear term $F$ by functional targeting {\color{blue} with one measurement}}
% \label{alg:reconstruction}
% \begin{algorithmic}[1]
% \STATE \textbf{Initialize:} Choose an arbitrary initial guess $f_0$ and a constant $M > 0$. Set iteration counter $k = 0$.
% \REPEAT
%     \STATE Solve the forward parabolic PDE to obtain $u_{f_k}$ using the current guess $f_k$.
%     \STATE Evaluate the residual on the boundary and determine the adjoint boundary condition: $g_{\text{adj}} = \chi_S(x) \big( \partial_\nu u_{f_k} - m \big)$.
%     \STATE Solve the backward adjoint problem \eqref{eqn:backw} to obtain the adjoint state $v_k$.
%     \STATE Define the intermediate function by solving the minimization problem: $f_{v_k} := \arg\min_{f} \|f(u_{f_k}) - v_k\|_2$.
%     \STATE Update the semilinear term using the relaxation formula:
%     $f_{k+1} := \frac{-f_{v_k}}{M+\lambda} + \frac{M f_k}{M+\lambda}$.
%     \STATE $k \leftarrow k + 1$
% \UNTIL{a stopping criterion is met (e.g., $\|f_k - f_{k-1}\| < \epsilon$).}
% \end{algorithmic}
% \end{algorithm}

Specifically, we propose a novel iterative reconstruction algorithm with multiple measurements in Algorithm \ref{alg:reconstruction_multi}. The algorithm consists of two loops: one inner loop going over all the flux measurements $\{m_i\}_{i=1}^I$, and for each measurement $m_i$, we construct one candidate $F_{v_k}^{(i)}$ via the standard least-squares fitting over a suitable subdomain $\Omega_+\subseteq \Omega$ (to be determined) in \eqref{eqn:sub-cand}. This step provides a map from the source function (the adjoint state) to the nonlinearity $F$ on the interval $[0,R]$, following the methodologies for the inverse source problem. The estimation of the confidence weights $\{w_i(u)\}_{i=1}^I$ can be achieved by the standard kernel density estimation \cite{KDE}. The aggregation step combines the individual estimators $\{F_{v_k}^{(i)}\}_{i=1}^I$, using the confidence weights $\{w_i(u)\}_{i=1}^I$ to balance different candidates. The convergence test at Step 14 can be based on the condition $\|F_k - F_{k-1}\|_{L^2(0,R)} \leq \epsilon$, for a prescribed tolerance $\epsilon$ or simply reaching the maximum number of iterations. In the numerical experiments, the algorithm converges rapidly, often in about five iterations.

\begin{algorithm}[htb!]
\caption{Iterative reconstruction of the semilinear term $F(u)$}
\label{alg:reconstruction_multi}
\begin{algorithmic}[1]
\STATE \textbf{Input}: The boundary flux measurements $\{m_i\}_{i=1}^I$. 
\STATE \textbf{Initialize:} Set the initial guess $F_0$, constants $M > 0$, $\lambda > 0$, $k = 0$.
\REPEAT
    \FOR{$i = 1$ \TO $I$}
        \STATE Solve problem \eqref{IBVP} using $F_k$ for the state $u_{F_k}^{(i)}$ (corresponding to the excitation for $m_i$).
        \STATE Compute the adjoint boundary condition:
        $g_{\text{a}}^{(i)} = \chi_S\big( \partial_\nu u_{F_k}^{(i)} - m_i \big)$.
        \STATE Solve the adjoint state $v_k^{(i)} $ from problem \eqref{eqn:backw} with $g=g_\text{a}^{(i)}$.
        \STATE Determine the one-measurement candidate  \begin{equation}\label{eqn:sub-cand} F_{v_k}^{(i)} := \arg\min_{f} \big\| f(u_{F_k}^{(i)}) - v_k^{(i)} \big\|_{L^2(\Omega_+)}.
        \end{equation}
    \ENDFOR
    \STATE Estimate the confidence weight $w_i(u)=p_i(u)$ of $u_{F_k}^{(i)}$, $i=1,\ldots,I$, on $\Omega_+$ for $u\in[0,R]$. 
    \STATE Aggregate the candidates
        \begin{equation}
            F_{v_k}(u) = \frac{\sum_{i=1}^{I} w_i(u) \, F_{v_k}^{(i)}(u)}{\sum_{i=1}^I w_i(u)}.
        \end{equation}
    (If all weights vanish at a given $u$, set $F_{v_k}(u) = F_k(u)$.)
    \STATE Update the semilinear term $F(u)$ using relaxation:
        \begin{equation}
            F_{k+1} := \dfrac{-F_{v_k}}{M+\lambda} + \dfrac{M\,F_k}{M+\lambda}.\label{eqn:ITERATIONS}
        \end{equation}
    \STATE $k \leftarrow k + 1$
\UNTIL{The convergence criterion is met and return $F=F_k$.} 
\end{algorithmic}
\end{algorithm}

\subsection{Numerical results and discussions}
%\begin{figure}[hbtp!]
%    \centering
%    \includegraphics[width=0.5\linewidth]{Figures/G.png}
%    \caption{The plot of $g_0(\theta,0.7)$}
%    \label{fig:G}
%\end{figure}
In the experiments, we fix $R=1$, and take four measurements in order to improve the stability of the reconstruction process: $(u_0,h)=(0,h_0)$, $(u_0,h)=(1,1-h_0)$, $(u_0,h)=(0.75,0.75-0.75h_0)$, $(u_0,h)=(0.25,0.25+0.75h_0)$, with $h_0(\theta,t) = (3 \cos^2 (\frac\theta2)\exp(-0.5\theta^2)-0.9)\min(0.7,t)$, for $\theta\in [-\pi,\pi)$. We choose the true nonlinearity $F^\dagger$ by $F^\dagger(s)=s^3$. In Algorithm \ref{alg:reconstruction_multi}, we take $\Omega_+:=\{x\in\Omega: x_1>0\}$, which is close to the region where the boundary flux focuses (i.e., with informative data). We employ the Galerkin finite element method (FEM) for the spatial discretization, and the implicit Euler scheme for the time discretization \cite{Thomee:2006}. 
To illustrate the stability of the inverse problem, we use also noisy data, generated by adding noise to the exact boundary flux $\partial_\nu u$:
\[
m^\varepsilon(x,t) = \partial_\nu u(x,t) (1+ \varepsilon \cdot \xi(x,t) ),\quad (x,t)\in S\times(0,T),
\]
where $\varepsilon \in \{0, 0.01, 0.1, 0.2\}$ denotes the relative noise level, and for each $(x,t)$, $\xi(x,t)$ follows the standard Gaussian distribution independently. To examine the impact of the measurement boundary $S$ on the reconstruction, we test six distinct configurations of $S$; see  Fig. \ref{fig:locations} for a schematic illustration. We denote the case with the boundary portion $S$ not containing the point $(1,0)$ as `far' data, since by construction, the flux focuses its density near $\theta = 0$. In the experiments, we set $M=0.2$ and $\lambda=10^{-5}$ in Algorithm \ref{alg:reconstruction_multi}.
The variations of the loss (for each flux measurement) and the $L^2(0,R)$ error of the reconstruction are shown in Fig. \ref{fig:min}. Note that we do not assume the knowledge of $F$ outside $[0,R]$, and also perform optimization outside $[0,R]$ but do not show the relevant results. Numerically, Algorithm \ref{alg:reconstruction_multi} converges rather steadily in terms of both objective value and $L^2(0,R)$ error, with the convergence reached in a few iterations.

\begin{figure}[htb!]
  \centering
  \begin{tikzpicture}[
    scale=1.2,
    >=latex,
    arc style/.style={very thick, ->, line cap=round},
    mark line/.style={thin, dashed, gray!60},
    % 带圆圈的数字样式
    circ node/.style={circle, draw=black, fill=white, inner sep=1pt, font=\small\bfseries\sffamily},
  ]
  \def\R{2.8}

  % 坐标系与45°分界线
  \draw[mark line] (-\R-0.3,0) -- (\R+0.3,0);
  \draw[mark line] (0,-\R-0.3) -- (0,\R+0.3);
  \draw[mark line] (0,0) -- (45:\R);
  \draw[mark line] (0,0) -- (-45:\R);
  \draw[mark line] (0,0) -- (135:\R);
  \draw[mark line] (0,0) -- (225:\R);
  \fill (0,0) circle (1.2pt) node[below left] {\footnotesize$O$};
  \node at (\R+0.3, -0.15) {$x$};
  \node at (-0.15, \R+0.1) {$y$};

  % 各层半径
  \def\rA{\R}
  \def\rB{0.85*\R}
  \def\rC{0.72*\R}
  \def\rD{0.59*\R}
  \def\rE{0.46*\R}
  \def\rF{0.33*\R}

  % ---- 六个同心圆弧，每个旁边放一个数字 ----
  % 1. Full data（整圆），序号放在右侧中部
  \draw[arc style, orange] (0:{\rA}) arc (0:360:{\rA});
  \node[circ node] at (0:{\rA}) {1};

  % 2. Half data（右半圆）
  \draw[arc style, teal] (-90:{\rB}) arc (-90:90:{\rB});
  \node[circ node] at (0:{\rB}) {2};

  % 3. Far half data（左半圆）
  \draw[arc style, purple] (90:{\rC}) arc (90:270:{\rC});
  \node[circ node] at (180:{\rC}) {3};

  % 4. Quarter data（右侧扇形）
  \draw[arc style, blue] (-45:{\rD}) arc (-45:45:{\rD});
  \node[circ node] at (0:{\rD}) {4};

  % 5. Far quarter data（左侧扇形）
  \draw[arc style, red] (135:{\rE}) arc (135:225:{\rE});
  \node[circ node] at (180:{\rE}) {5};

  % 6. Far three-quarter data（除右侧扇形外）
  \draw[arc style, green!50!black] (45:{\rF}) arc (45:315:{\rF});
  \node[circ node] at (180:{\rF}) {6};

  % 图例
  \node[draw, fill=white, rounded corners, anchor=north east, 
        font=\footnotesize\sffamily, align=left] 
        at (\R+5.5, \R-2) {
    \textbf{Legend}\\[2pt]
    \textcolor{orange}{\rule{1.2em}{2pt}} 1~~Full data\\[2pt]
    \textcolor{teal}{\rule{1.2em}{2pt}} 2~~Half data ($x_1>0$)\\[2pt]
    \textcolor{purple}{\rule{1.2em}{2pt}} 3~~Far half data ($x_1<0$)\\[2pt]
    \textcolor{blue}{\rule{1.2em}{2pt}} 4~~Quarter data ($x_1>R/\sqrt2$)\\[2pt]
    \textcolor{red}{\rule{1.2em}{2pt}} 5~~Far quarter data ($x_1<-R/\sqrt2$)\\[2pt]
    \textcolor{green!50!black}{\rule{1.2em}{2pt}} 6~~Far three-quarter data ($x_1<R/\sqrt2$)
  };

  \end{tikzpicture}
  \caption{Schematic illustration of the measurement configuration $S$.}
  \label{fig:locations}
\end{figure}

\begin{figure}[hbt!]
    \centering
    \begin{tabular}{cc}
    \includegraphics[width=0.5\linewidth]{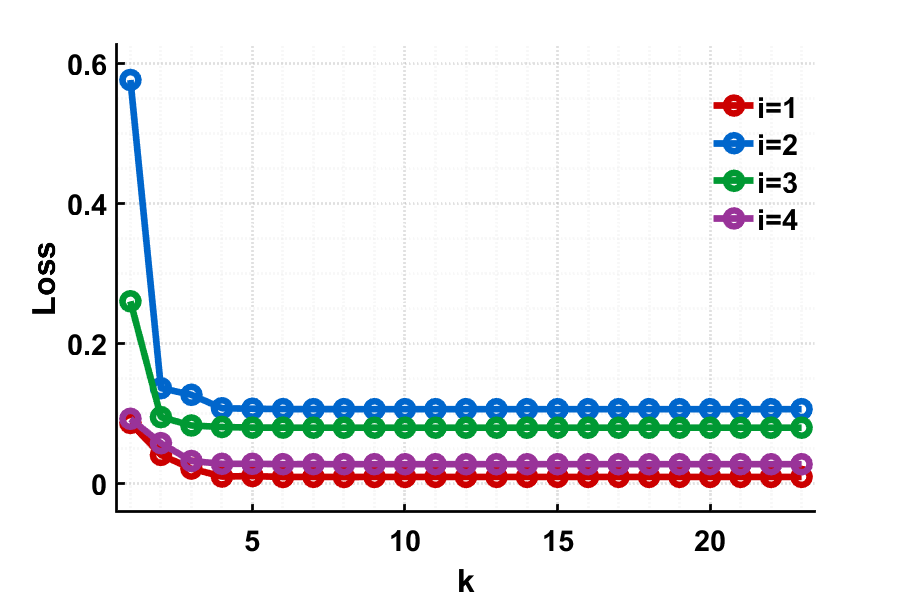}
         &    \includegraphics[width=0.5\linewidth]{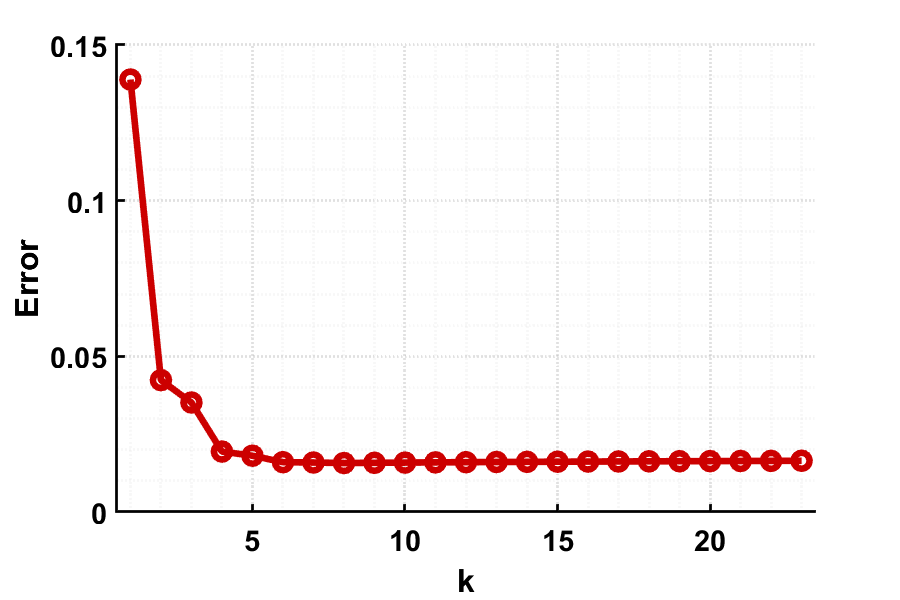}
    \end{tabular}
    \caption{The variation of the individual losses $J_\lambda^i(F)$ for the flux measurements $\{m_i\}_{i=1}^4$ during the iteration (left); and the $L^2(0,R)$ error of the recovered semilinear term $F$ during the iteration (right), for the case of full data without noise.}
    \label{fig:min}
\end{figure}

In Table \ref{tab:L2_errors}, we present quantitative $L^2(0,R)$ errors of the recovered semilinear term $\widehat{F}(u)$. Fig.~\ref{Fig:Method2_1} shows the reconstructions in the full, half, and far-half data settings, and Fig.~\ref{Fig:Method2_2} shows the results for the quarter, far-quarter, and far-three-quarter settings. The full data case and the far three-quarter data case lead to the most accurate reconstructions, followed by the half data and quarter data cases, then followed by the far half data case, and the far quarter case gives the worst reconstructions. Specifically, for the far half data case and the far quarter data case, the recovered semilinear terms $\widehat F(u)$ are inaccurate.

\begin{table}[htb!]
\centering
\caption{The $L^2(0,R)$ errors for different measurement configurations and noise levels.}
\label{tab:L2_errors}
\begin{tabular}{l|cccc}
\hline
Configuration & $\varepsilon = 0$ & $\varepsilon = 1\%$ & $\varepsilon = 10\%$ & $\varepsilon = 20\%$ \\
\hline
Full data              & 1.71e-2 & 1.72e-2 & 2.08e-2 & 1.94e-2 \\
Half data ($x_1>0$)      & 2.78e-2 & 2.76e-2 & 2.74e-2 & 3.20e-2 \\
Far half data ($x_1<0$)  & 5.81e-2 & 5.80e-2 & 5.77e-2 & 5.90e-2 \\
Quarter data ($x_1>R/\sqrt{2}$)       & 3.86e-2 & 3.86e-2 & 3.70e-2 & 3.60e-2 \\
Far quarter data ($x_1<-R/\sqrt{2}$)  & 8.04e-2 & 8.04e-2 & 8.12e-2 & 8.04e-2 \\
Far three-quarter data ($x_1<R/\sqrt{2}$) & 1.40e-2 & 1.40e-2 & 1.43e-2 & 1.61e-2 \\
\hline
\end{tabular}
\end{table}

\begin{figure}[htb!]
\centering
\setlength{\tabcolsep}{0pt}
\begin{tabular}{cccc}
\includegraphics[width=0.25\linewidth]{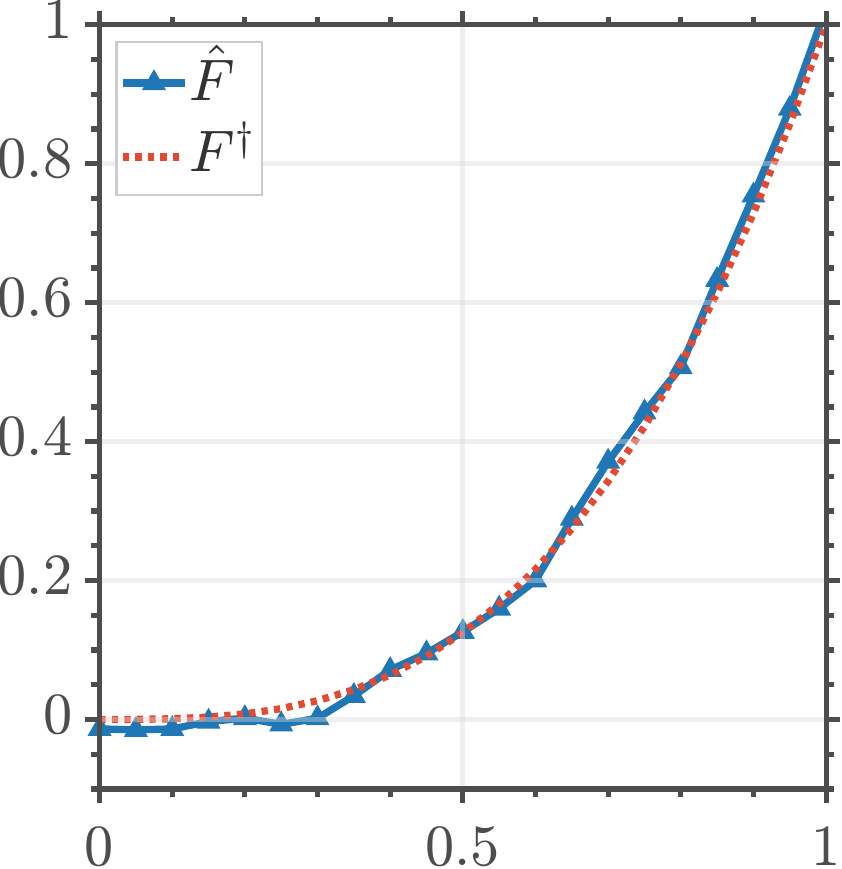}&
\includegraphics[width=0.25\linewidth]{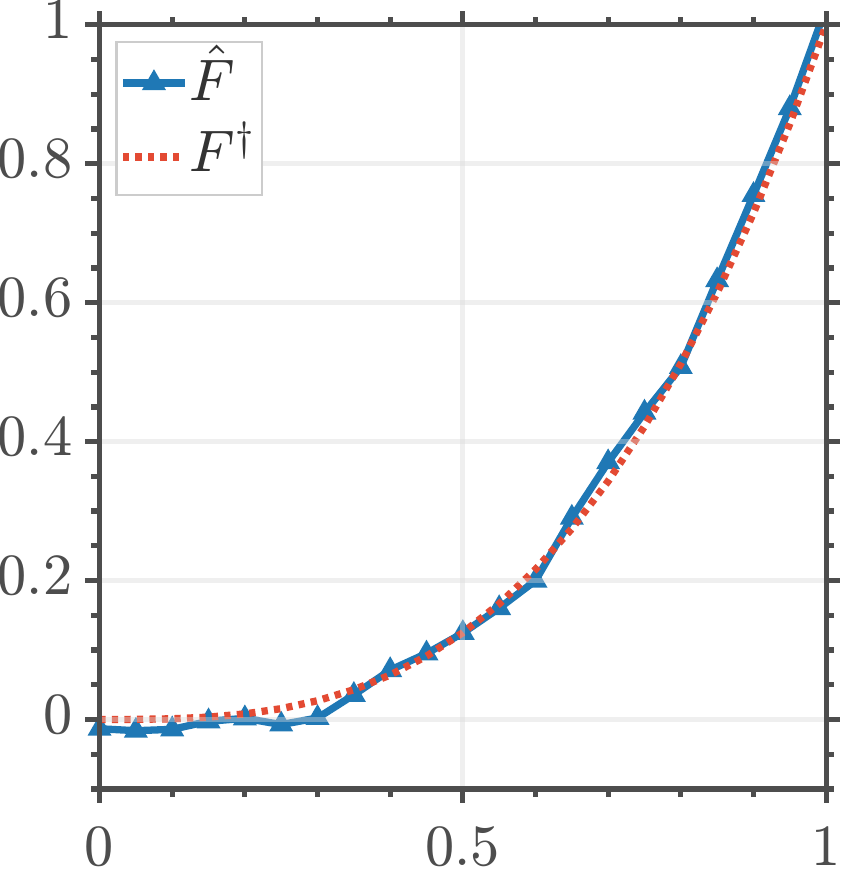}& 
\includegraphics[width=0.25\linewidth]{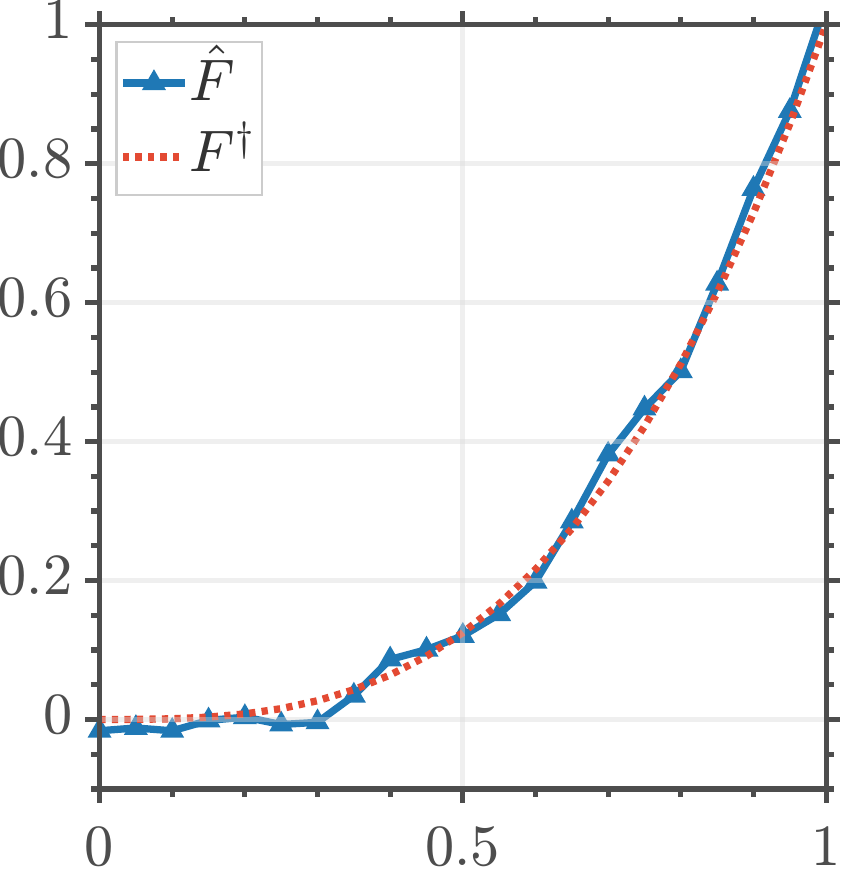}&
\includegraphics[width=0.25\linewidth]{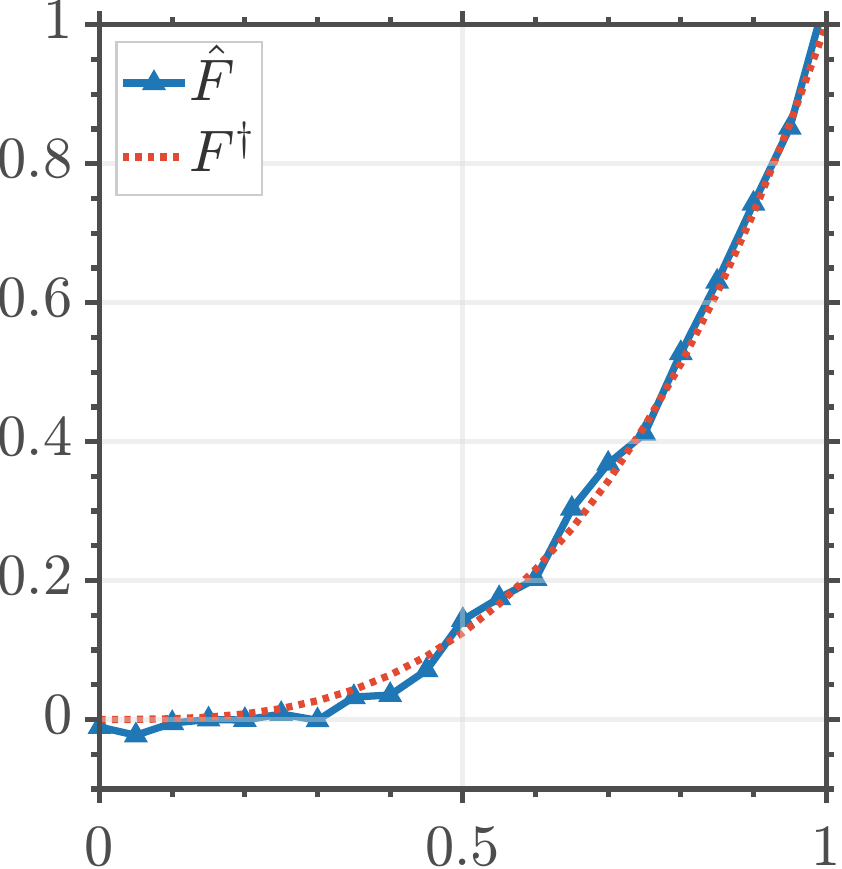}  \\
\includegraphics[width=0.25\linewidth]{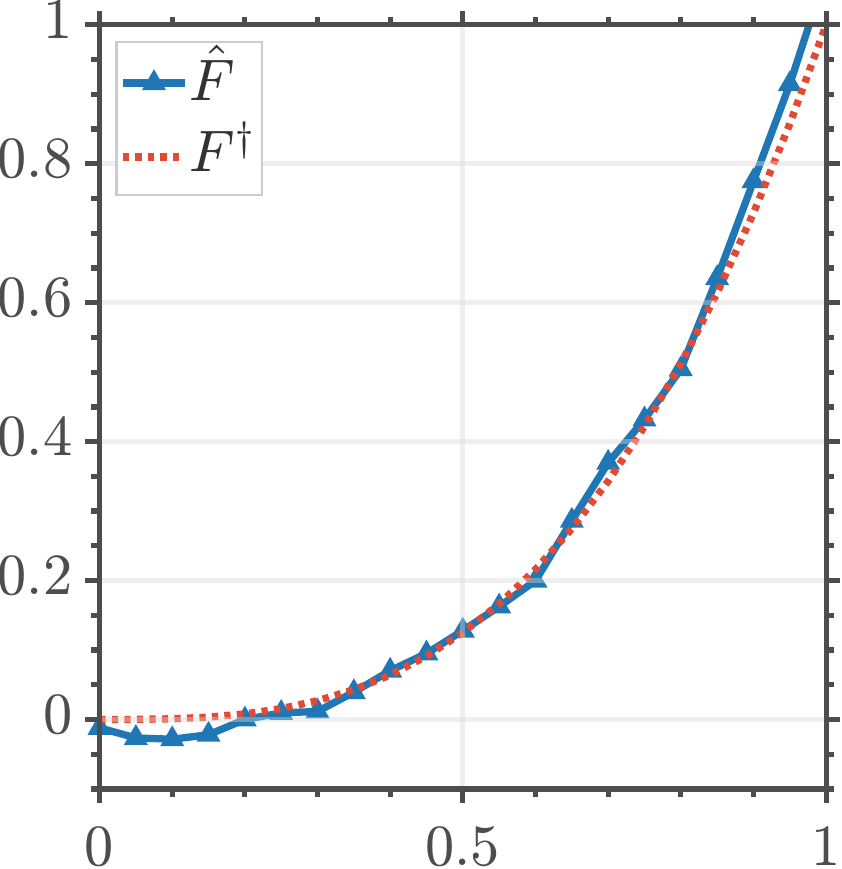}&
\includegraphics[width=0.25\linewidth]{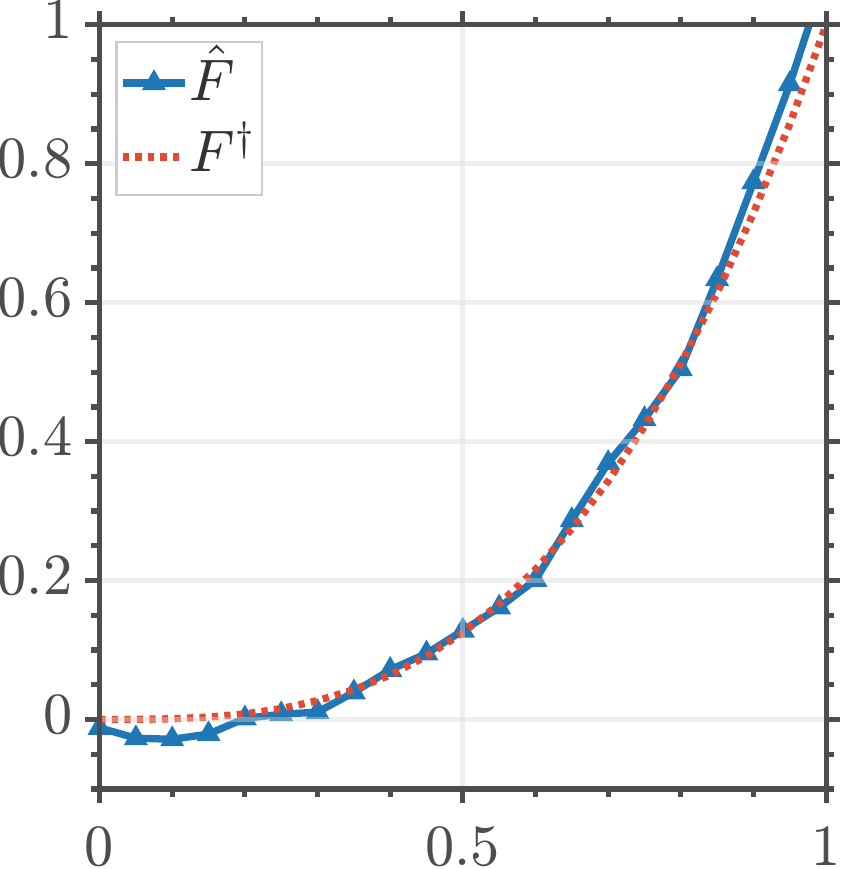}& 
\includegraphics[width=0.25\linewidth]{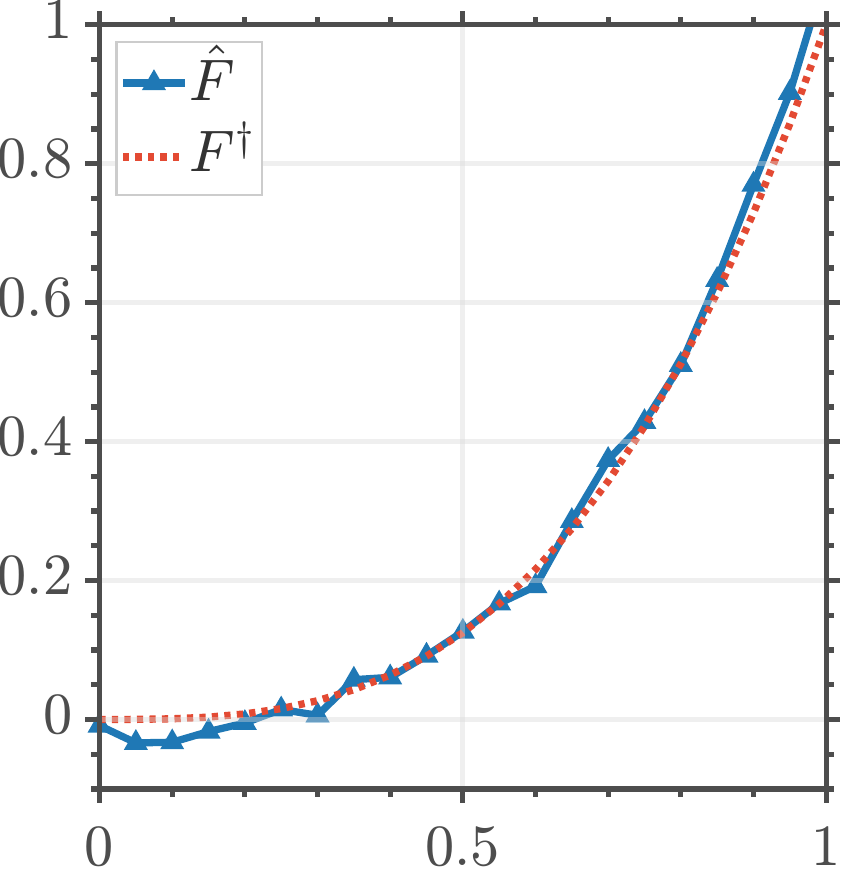}&
\includegraphics[width=0.25\linewidth]{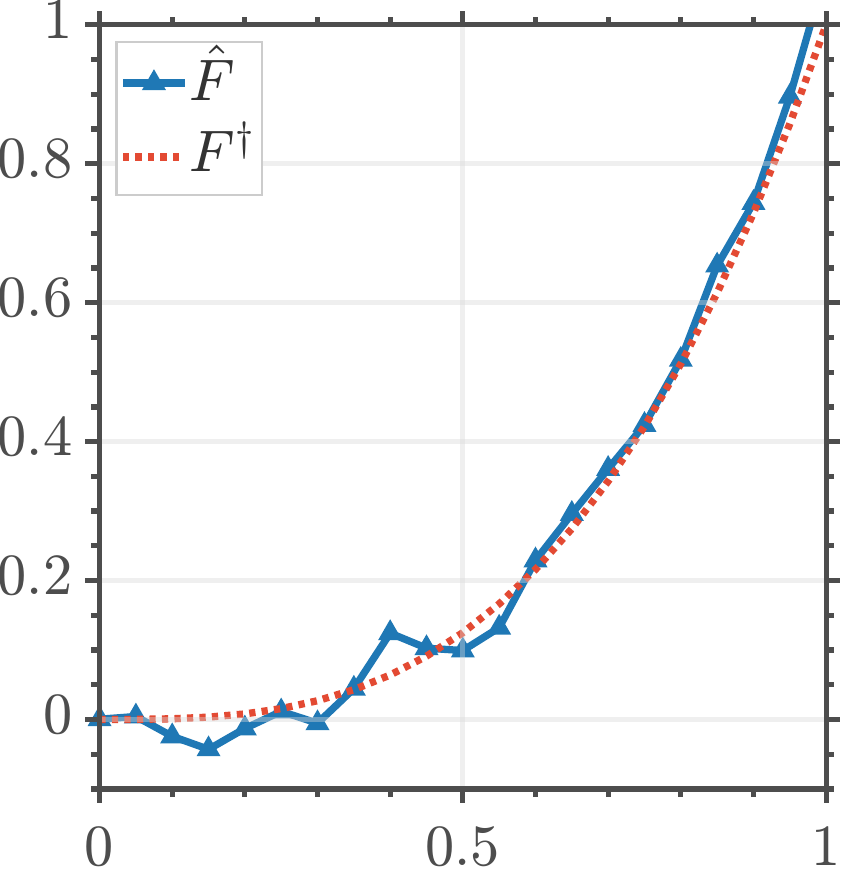}\\
\includegraphics[width=0.25\linewidth]{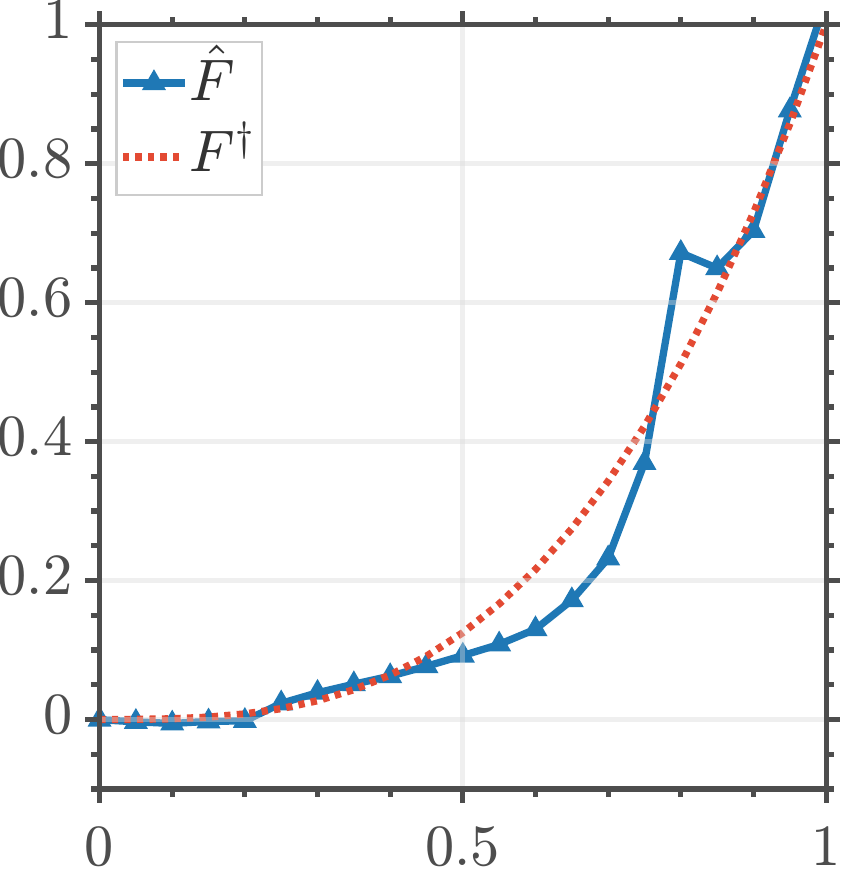}&
\includegraphics[width=0.25\linewidth]{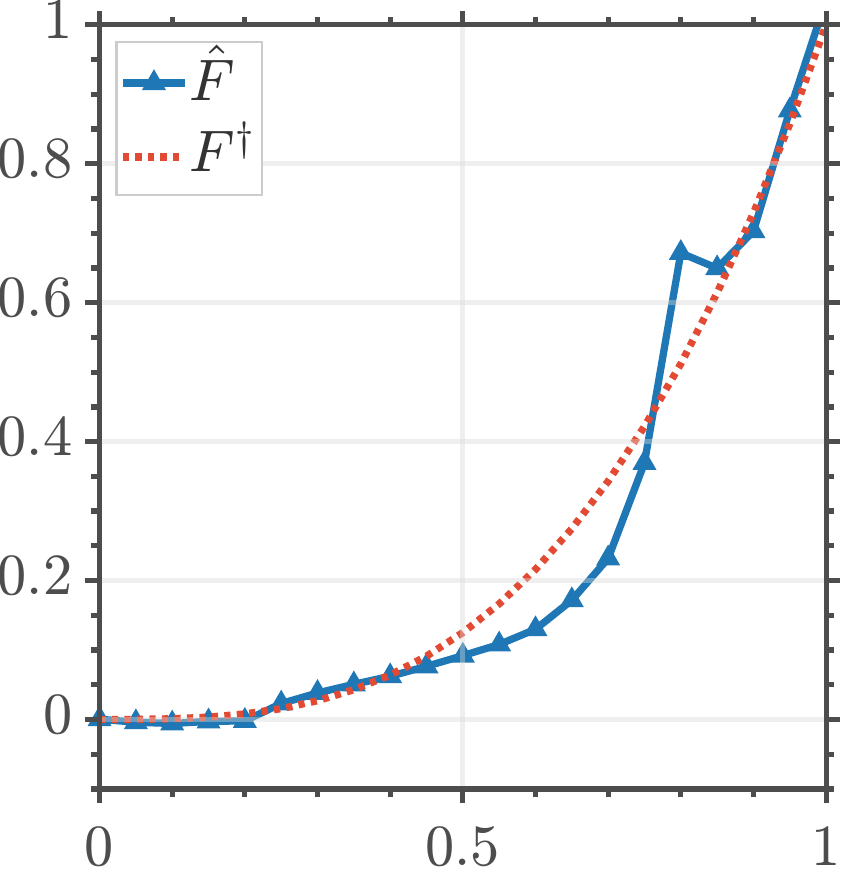}& 
\includegraphics[width=0.25\linewidth]{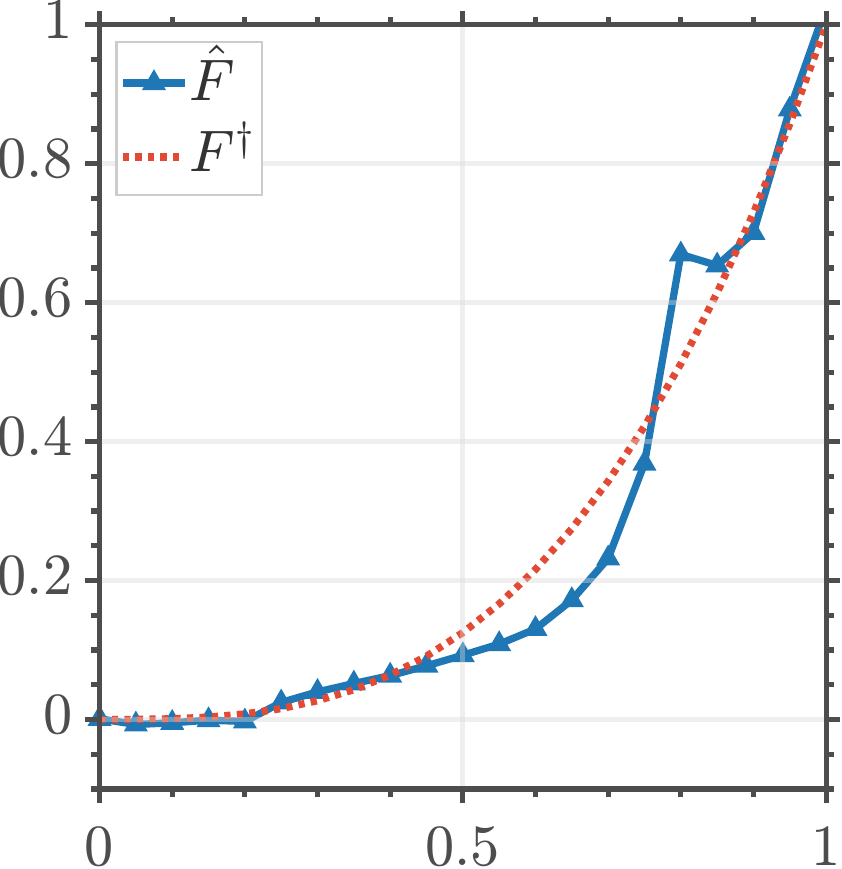}&
\includegraphics[width=0.25\linewidth]{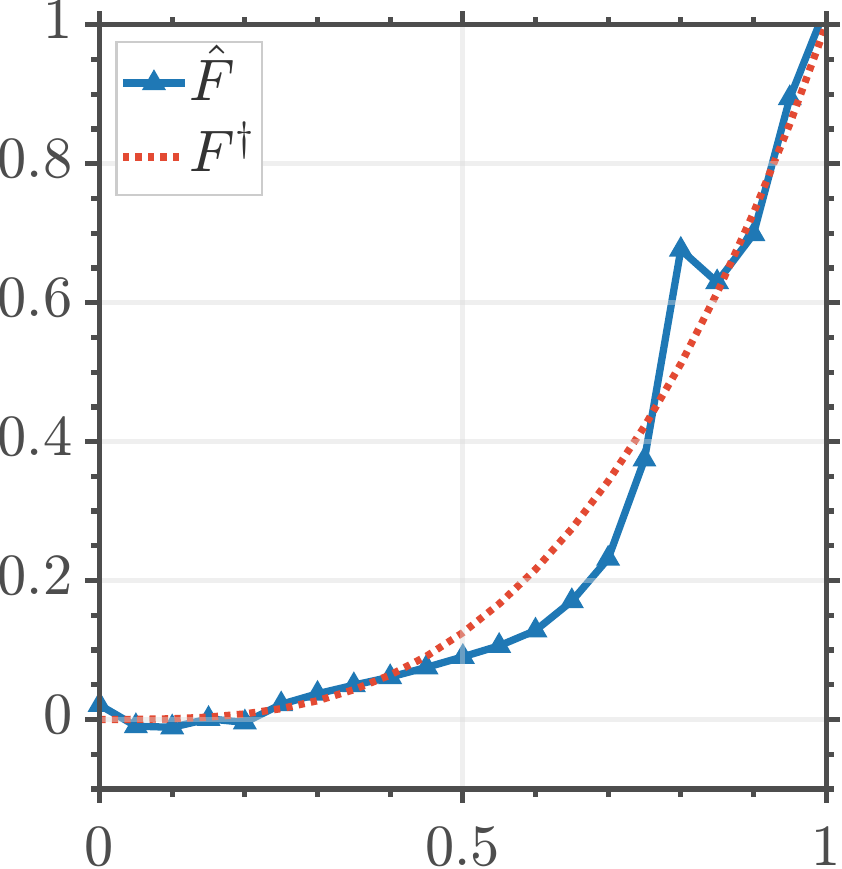}\\
$\varepsilon = 0$ & $\varepsilon = 1\%$ & $\varepsilon = 10\%$ & $\varepsilon = 20\%$
\end{tabular}
\caption{The numerical reconstructions for the cases at different noise levels: Full data, half data ($x_1>0$), far half data ($x_1<0$) (from top to bottom).}
\label{Fig:Method2_1}
\end{figure}

\begin{figure}[hbt!]
\centering
\setlength{\tabcolsep}{0pt}
\begin{tabular}{cccc}
\includegraphics[width=0.25\linewidth]{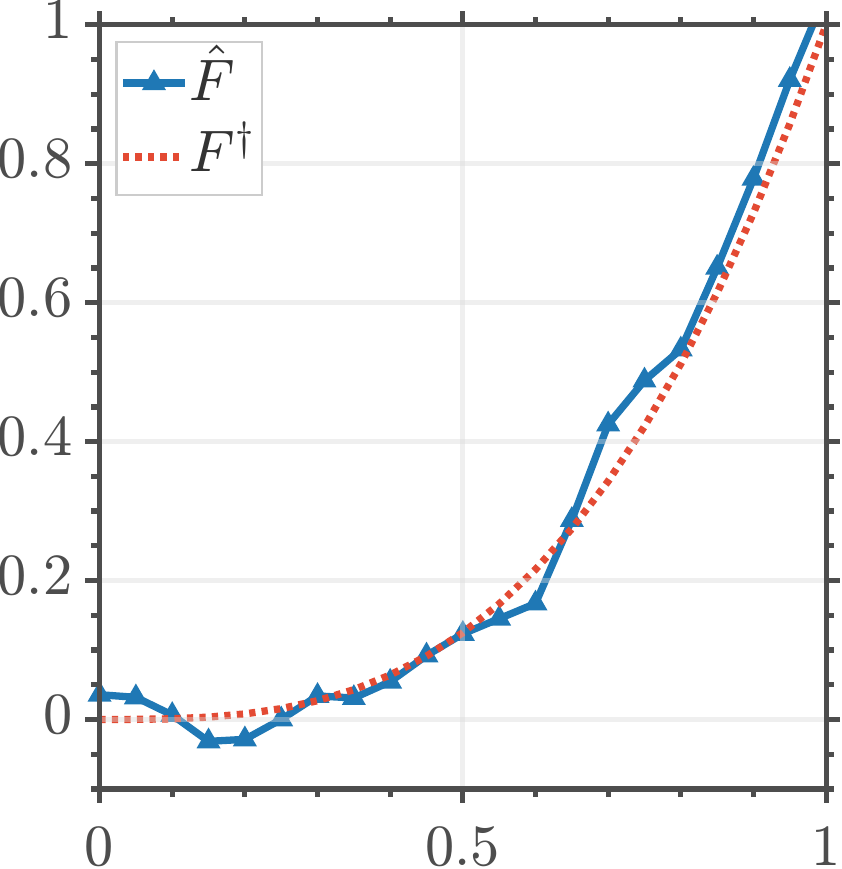}&
\includegraphics[width=0.25\linewidth]{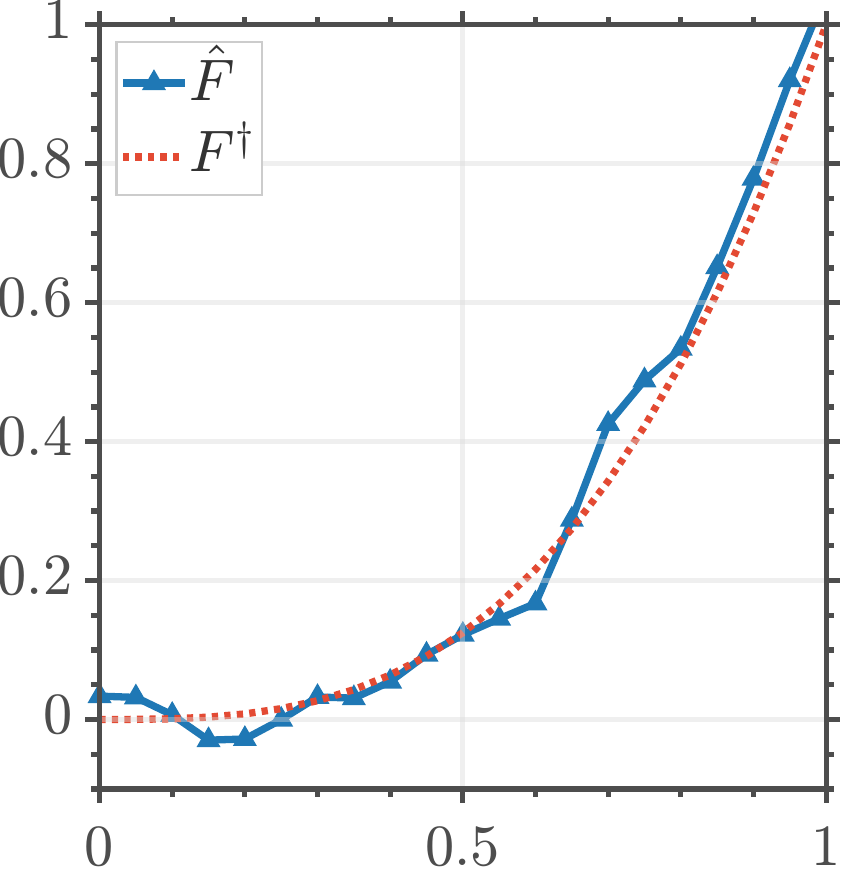}& 
\includegraphics[width=0.25\linewidth]{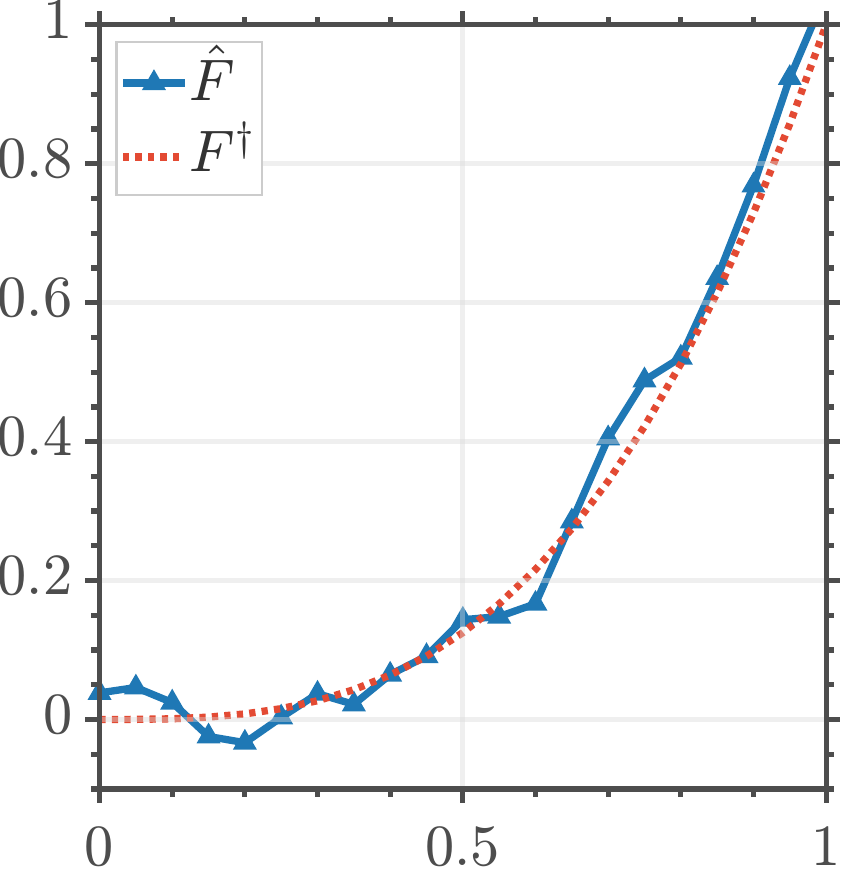}&
\includegraphics[width=0.25\linewidth]{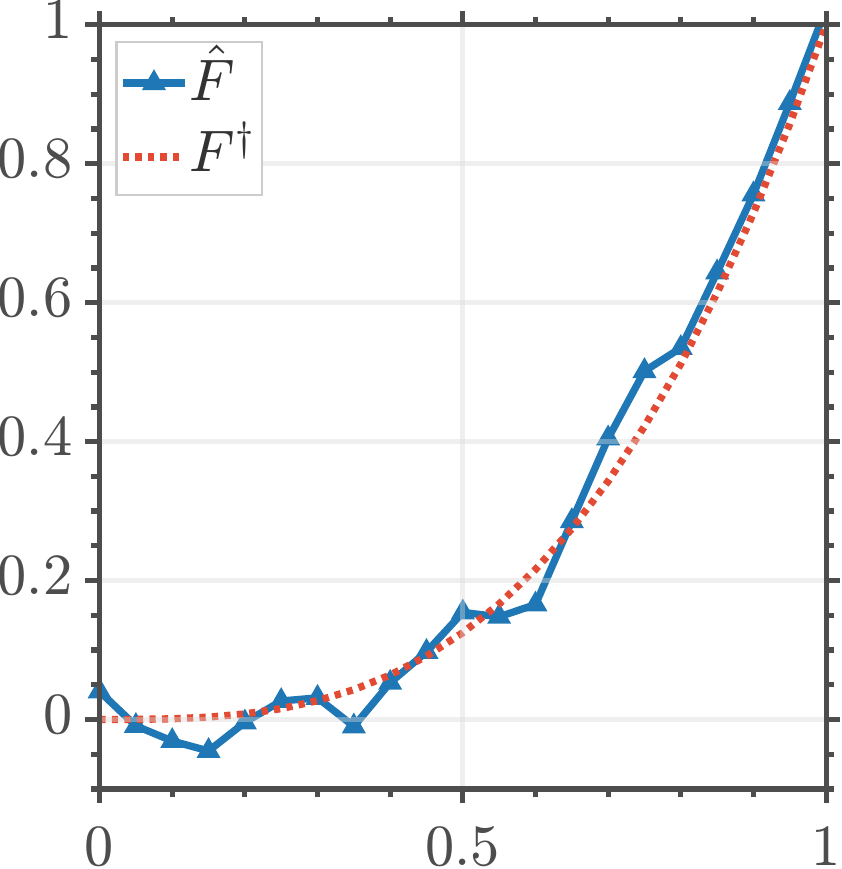}\\
\includegraphics[width=0.25\linewidth]{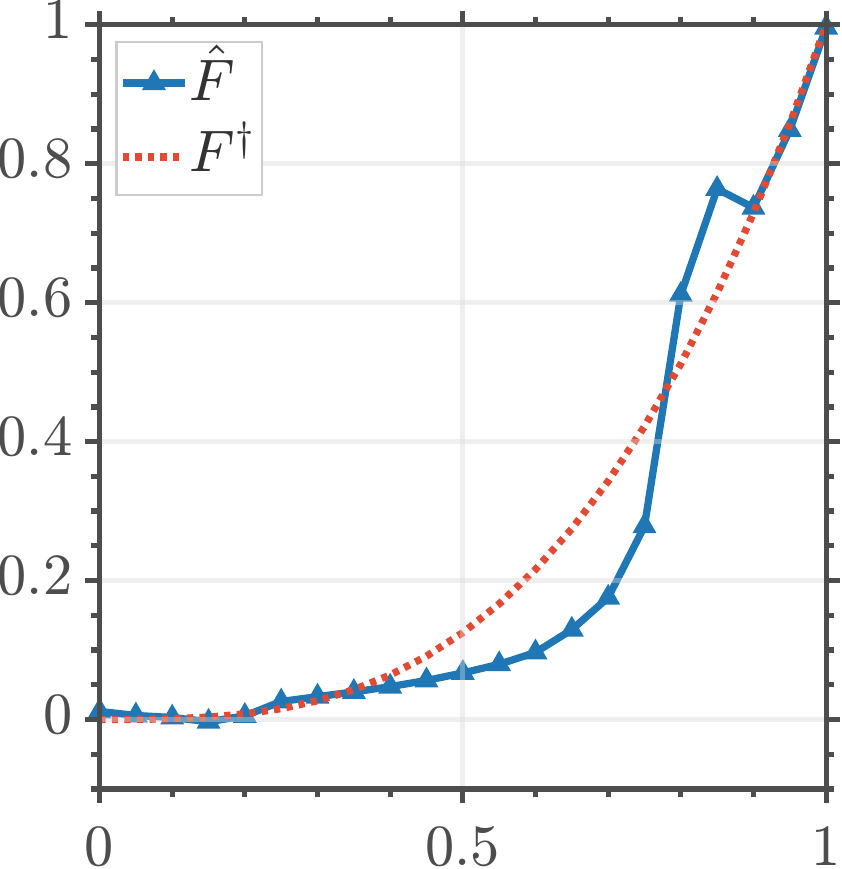}&
\includegraphics[width=0.25\linewidth]{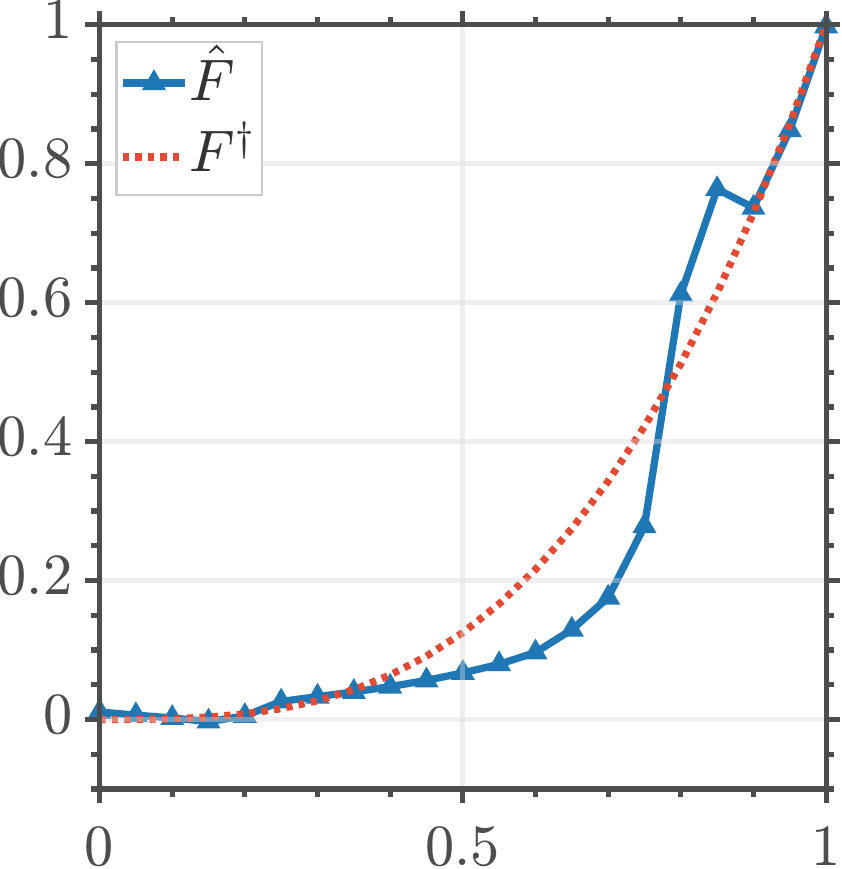}& 
\includegraphics[width=0.25\linewidth]{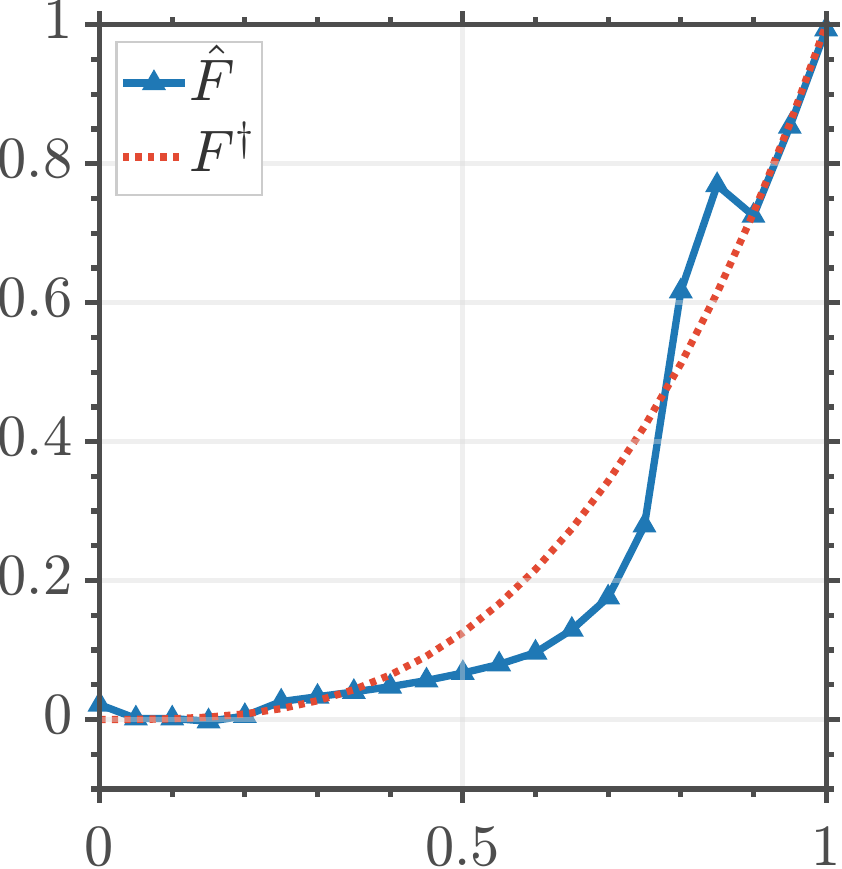}&
\includegraphics[width=0.25\linewidth]{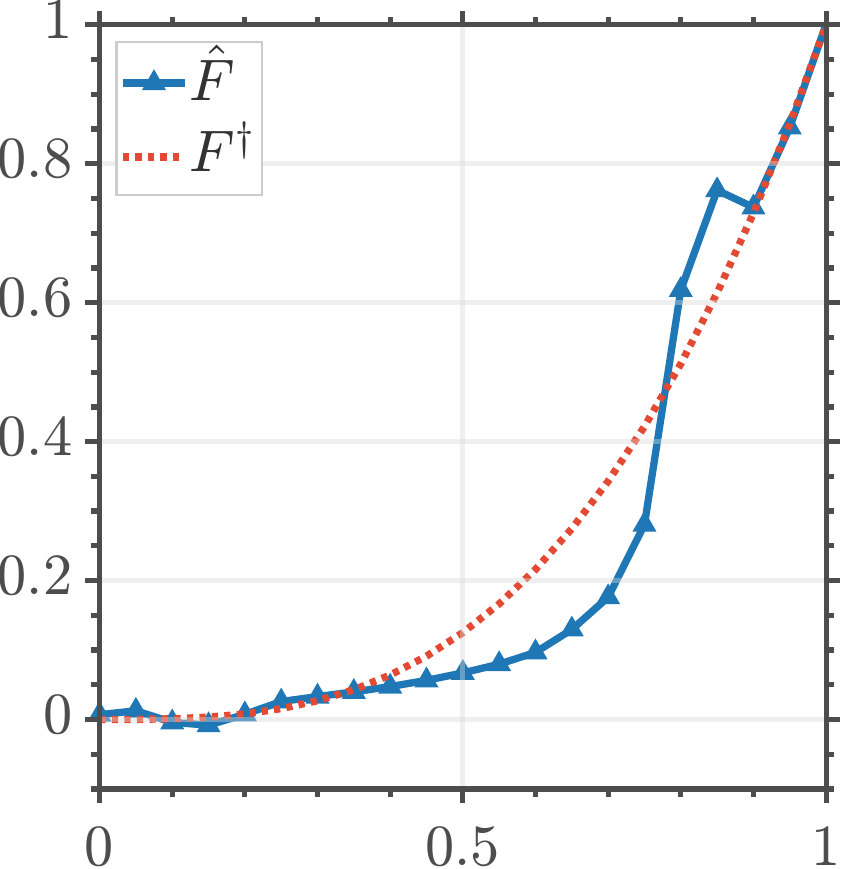}\\
\includegraphics[width=0.25\linewidth]{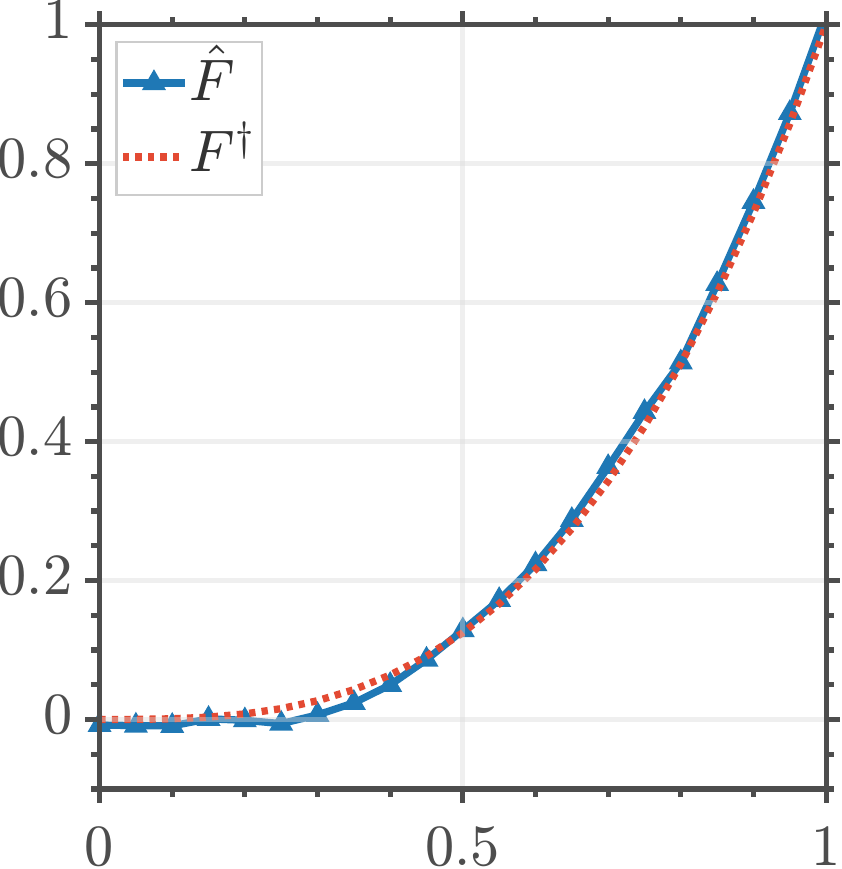}&
\includegraphics[width=0.25\linewidth]{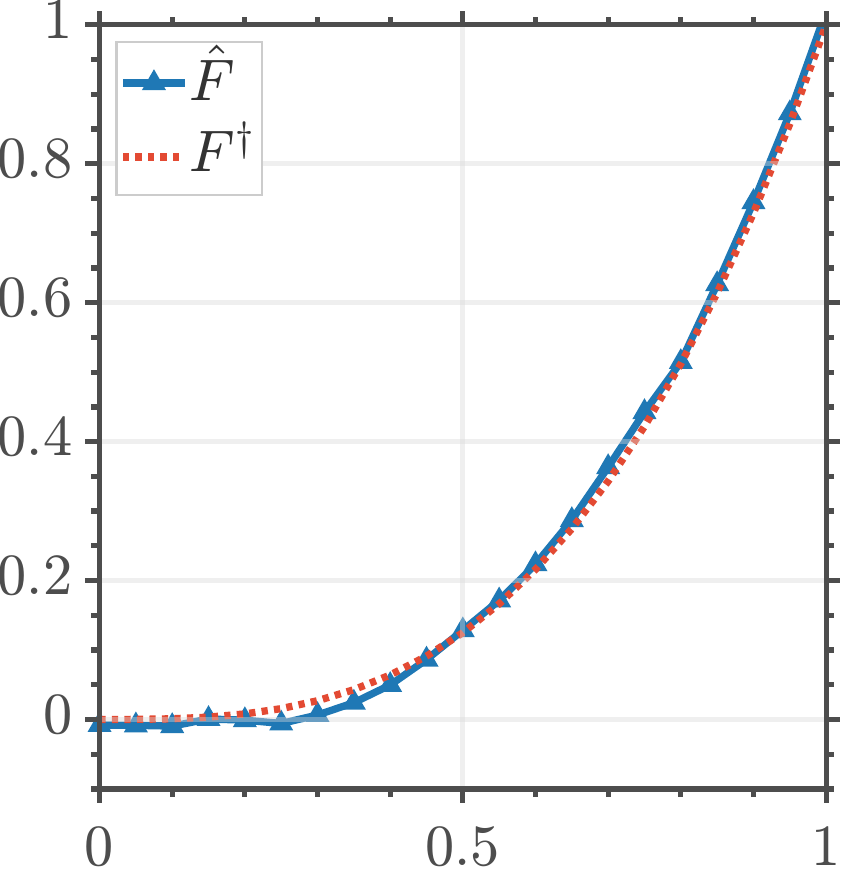}& 
\includegraphics[width=0.25\linewidth]{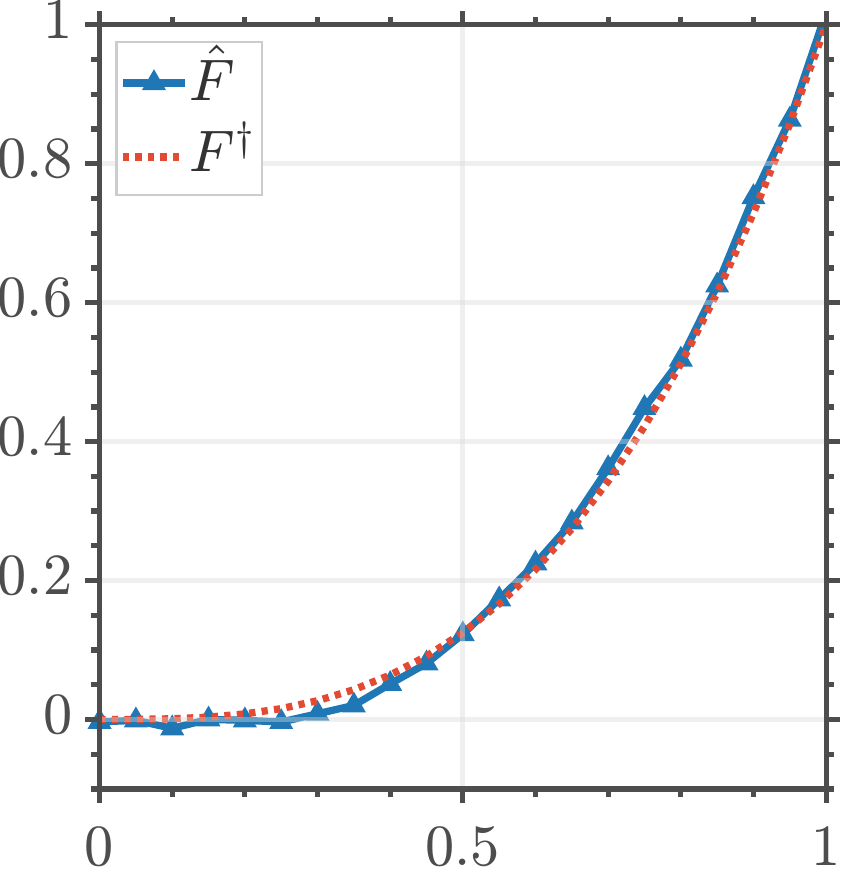}&
\includegraphics[width=0.25\linewidth]{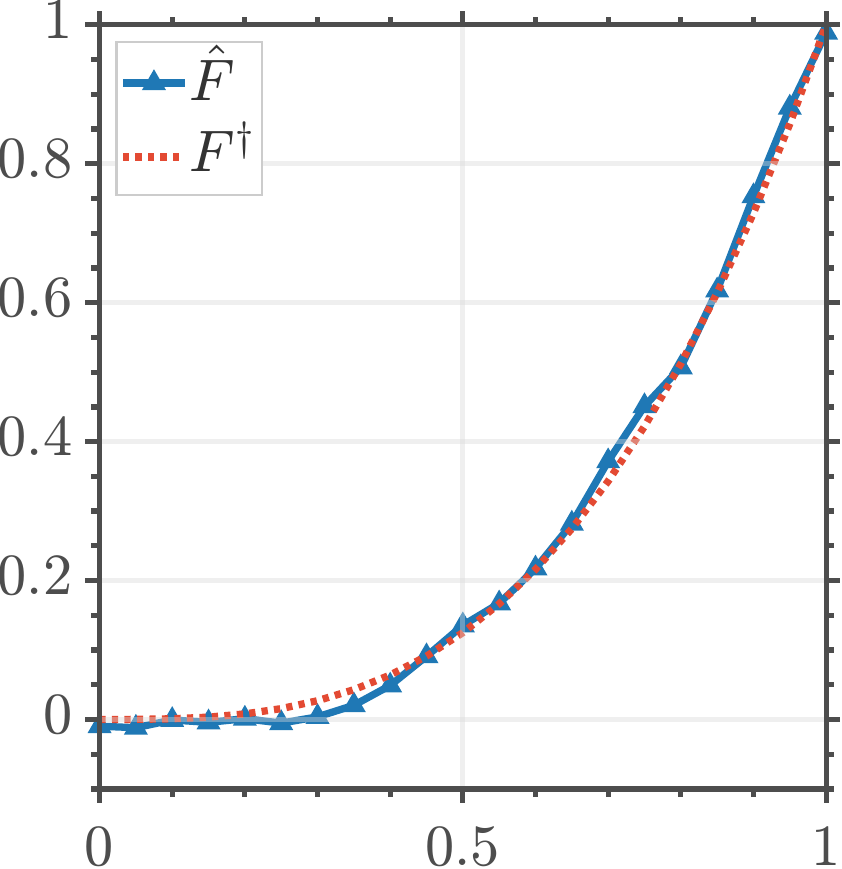}\\
$\varepsilon = 0$ & $\varepsilon = 1\%$ & $\varepsilon = 10\%$ & $\varepsilon = 20\%$
\end{tabular}
\caption{The numerical reconstructions for the cases at different noise levels: Quarter data ($x_1>R/\sqrt{2}$), far quarter data ($x_1<-R/\sqrt{2}$), far three-quarter data ($x_1<R/\sqrt{2}$) (from top to bottom).}    
\label{Fig:Method2_2}
\end{figure}

\bibliographystyle{abbrv} 
\bibliography{references}

@book{choulli2009introduction,
    AUTHOR = {Choulli, Mourad},
     TITLE = {Une introduction aux probl\`emes inverses elliptiques et
              paraboliques},
 PUBLISHER = {Springer-Verlag, Berlin},
      YEAR = {2009},
     PAGES = {xxii+249},
      ISBN = {978-3-642-02459-7},
   MRCLASS = {35-02 (35J15 35K10 35R30)},
  MRNUMBER = {2554831},
MRREVIEWER = {Paul\ E.\ Sacks},
       DOI = {10.1007/978-3-642-02460-3},
       URL = {https://doi.org/10.1007/978-3-642-02460-3},
}

@article{KDE,
 ISSN = {00034851},
 URL = {http://www.jstor.org/stable/2237880},
 author = {Emanuel Parzen},
 journal = { Annals Math. Stat.},
 number = {3},
 pages = {1065--1076},
 publisher = {Institute of Mathematical Statistics},
 title = {On Estimation of a Probability Density Function and Mode},
 urldate = {2026-05-06},
 volume = {33},
 year = {1962}
}

@article {CannonDuChateau:1998,
    AUTHOR = {Cannon, J. R. and DuChateau, Paul},
     TITLE = {Structural identification of an unknown source term in a heat
              equation},
   JOURNAL = {Inverse Problems},
  FJOURNAL = {Inverse Problems. An International Journal on the Theory and
              Practice of Inverse Problems, Inverse Methods and Computerized
              Inversion of Data},
    VOLUME = {14},
      YEAR = {1998},
    NUMBER = {3},
     PAGES = {535--551},
      ISSN = {0266-5611,1361-6420},
   MRCLASS = {35R30 (35K05 35K60 80A23)},
  MRNUMBER = {1629991},
MRREVIEWER = {Maurizio\ Grasselli},
       DOI = {10.1088/0266-5611/14/3/010},
       URL = {https://doi.org/10.1088/0266-5611/14/3/010},
}

@article{PR2,
 author = {Pilant, Michael and Rundell, William},
 title = {Fixed point methods for a nonlinear parabolic inverse coefficient problem},
 fjournal = {Communications in Partial Differential Equations},
 journal = {Commun. Partial Differ. Equations},
 issn = {0360-5302},
 volume = {13},
 number = {4},
 pages = {469--493},
 year = {1988},
 language = {English},
 doi = {10.1080/03605308808820549},
 zbMATH = {4056207},
 Zbl = {0647.35083}
}

@article{Lo,
 author = {Lorenzi, A.},
 title = {An inverse problem for a semilinear parabolic equation},
 fjournal = {Annali di Matematica Pura ed Applicata. Serie Quarta},
 journal = {Ann. Mat. Pura Appl. (4)},
 issn = {0373-3114},
 volume = {131},
 pages = {145--166},
 year = {1982},
 language = {English},
 doi = {10.1007/BF01765150},
 zbMATH = {3776102},
 Zbl = {0493.35078}
}

@book {ItoJin:2015,
    AUTHOR = {Ito, Kazufumi and Jin, Bangti},
     TITLE = {Inverse {P}roblems: {T}ikhonov {T}heory and {A}lgorithms},
 PUBLISHER = {World Scientific Publishing Co. Pte. Ltd., Hackensack, NJ},
      YEAR = {2015},
     PAGES = {x+318},
      ISBN = {978-981-4596-19-0},
   MRCLASS = {65-02 (49N45 65J22 65M32 65N21)},
  MRNUMBER = {3244283},
MRREVIEWER = {Sudhir H. Kulkarni},
}

@book{Benson:1960,
    author = {Benson, Sidney W.},
    title = {{The Foundations of Chemical Kinetics}},
    publisher = {McGraw-Hill, New York},
    year = {1960}
}

@book{Capasso:1998,
    author = {Capasso, Vincenzo},
    title = {{Mathematical Modelling for Polymer Processing}},
    publisher = {Springer, Berlin},
    year = {1998} 
}

@book{KrauseLoch:2002,
    editor = {Krause,  Dieter and Loch, Horst},
    title = {{Mathematical Simulation in Glass Technology}},
    publisher = {Springer, Berlin},
    year = {2002}
}

@article{Muzylev:1980,
    author = {Muzylev, N. V.},
    title = {Uniqueness theorems for some converse problems of heat equation},
    journal = {U.S.S.R. Compur. Muths. Murh. Phys.},
    year = {1980},
    volume = {20},
    number = {2},
     pages = {120--134},
}

@book {Thomee:2006,
    AUTHOR = {Thom\'{e}e, Vidar},
     TITLE = {Galerkin {F}inite {E}lement {M}ethods for {P}arabolic {P}roblems},
   EDITION = {Second},
 PUBLISHER = {Springer-Verlag, Berlin},
      YEAR = {2006},
     PAGES = {xii+370},
      ISBN = {978-3-540-33121-6; 3-540-33121-2},
   MRCLASS = {65-02 (65M15 65M60)},
  MRNUMBER = {2249024},
}

@article{PR,
 author = {Pilant, Michael  and Rundell, William},
 title = {An inverse problem for a nonlinear parabolic equation},
 fjournal = {Communications in Partial Differential Equations},
 journal = {Commun. Partial Differ. Equations},
 issn = {0360-5302},
 volume = {11},
 pages = {445--457},
 year = {1986},
 language = {English},
 doi = {10.1080/03605308608820430},
 zbMATH = {3955555},
 Zbl = {0594.35057}
}

@book{ladyzenskaja_linear_1968,
	address = {Providence, RI},
	title = {Linear and {Quasi}-linear {Equations} of {Parabolic} {Type}},
	publisher = {AMS},
	author = {Ladyzenskaja, O. and Solonnikov, V. and Uralceva, N.},
	translator = {Smith, S.},
	year = {1968},
	doi = {10.1090/mmono/023},
}

@article{choulli2006stable,
  title={Stable determination of a semilinear term in a parabolic equation},
  author={Choulli, Mourad and Ouhabaz, El Maati and Yamamoto, Masahiro},
  journal={Commun. Pure Appl. Anal.},
  volume={5},
  number={3},
  pages={447--462},
  year={2006},
  publisher={AIM SCIENCES}
}

@book{friedman2008partial,
    AUTHOR = {Friedman, Avner},
     TITLE = {{Partial Differential Equations of Parabolic Type}},
 PUBLISHER = {Prentice-Hall, Inc., Englewood Cliffs, NJ},
      YEAR = {1964},
     PAGES = {xiv+347},
   MRCLASS = {35.00 (35.62)},
  MRNUMBER = {181836},
MRREVIEWER = {B.\ Frank\ Jones, Jr.},
}

@book{ito1992diffusion,
    AUTHOR = {It\^o, Seiz\^o},
     TITLE = {{Diffusion Equations}},
 PUBLISHER = {AMS, Providence, RI},
      YEAR = {1992},
     PAGES = {x+225},
      ISBN = {0-8218-4570-5},
   MRCLASS = {35K10 (35-01 35-02 47N20)},
  MRNUMBER = {1195786},
MRREVIEWER = {P.\ Szeptycki},
       DOI = {10.1090/mmono/114},
       URL = {https://doi.org/10.1090/mmono/114},
}

@article{isakov1993uniqueness,
  title={On uniqueness in inverse problems for semilinear parabolic equations},
  author={Isakov, Victor},
  journal={Arch. Ration. Mech. Anal.},
  volume={124},
  number={1},
  pages={1--12},
  year={1993},
  publisher={Springer}
}

@article{DR,
 author = {DuChateau, Paul and Rundell, William},
 title = {Unicity in an inverse problem for an unknown reaction term in a reaction- diffusion equation},
 journal = {J. Differ. Equations},
 issn = {0022-0396},
 volume = {59},
 pages = {155--164},
 year = {1985},
 language = {English},
 doi = {10.1016/0022-0396(85)90152-4},
 zbMATH = {3899457},
 Zbl = {0564.35097}
}

@article{SU,
 author = {Sun, Ziqi and Uhlmann, Gunther},
 title = {Inverse problems in quasilinear anisotropic media},
 fjournal = {American Journal of Mathematics},
 journal = {Am. J. Math.},
 issn = {0002-9327},
 volume = {119},
 number = {4},
 pages = {771--797},
 year = {1997},
 language = {English},
 doi = {10.1353/ajm.1997.0027},
 url = {muse.jhu.edu/journals/american_journal_of_mathematics/toc/ajm119.4.html},
 zbMATH = {1052803},
 Zbl = {0886.35176}
}

@article{KR1,
 author = {Kaltenbacher, Barbara and Rundell, William},
 title = {On the identification of a nonlinear term in a reaction-diffusion equation},
 fjournal = {Inverse Problems},
 journal = {Inverse Problems},
 issn = {0266-5611},
 volume = {35},
 number = {11},
 pages = {115007, 38 pp.},
 year = {2019},
 language = {English},
 doi = {10.1088/1361-6420/ab2aab},
 zbMATH = {7115209},
 Zbl = {1427.35359}
}

@article{KR2,
 author = {Kaltenbacher, Barbara and Rundell, William},
 title = {Recovery of multiple coefficients in a reaction-diffusion equation},
 fjournal = {Journal of Mathematical Analysis and Applications},
 journal = {J. Math. Anal. Appl.},
 issn = {0022-247X},
 volume = {481},
 number = {1},
 pages = {123475, 23 pp.},
 year = {2020},
 language = {English},
 doi = {10.1016/j.jmaa.2019.123475},
 zbMATH = {7113650},
 Zbl = {1427.35358}
}

@article{CZ,
 author = {Choulli, Mourad and Zeghal, Ahmed},
 title = {Un r{\'e}sultat d'unicit{\'e} pour un probl{\`e}me inverse parabolique semi-lin{\'e}aire. ({A} uniqueness result for a semilinear parabolic inverse problem)},
 fjournal = {Comptes Rendus de l'Acad{\'e}mie des Sciences. S{\'e}rie I},
 journal = {C. R. Acad. Sci., Paris, S{\'e}r. I},
 issn = {0764-4442},
 volume = {315},
 number = {10},
 pages = {1051--1053},
 year = {1992},
 language = {French},
 zbMATH = {148780},
 Zbl = {0763.35104}
}

@article{KLU,
 author = {Kurylev, Yaroslav and Lassas, Matti and Uhlmann, Gunther},
 title = {Inverse problems for {Lorentzian} manifolds and non-linear hyperbolic equations},
 fjournal = {Inventiones Mathematicae},
 journal = {Invent. Math.},
 issn = {0020-9910},
 volume = {212},
 number = {3},
 pages = {781--857},
 year = {2018},
 language = {English},
 doi = {10.1007/s00222-017-0780-y},
 zbMATH = {6897402},
 Zbl = {1396.35074}
}

@article{kian2024determining,
    AUTHOR = {Kian, Yavar and Liimatainen, Tony and Lin, Yi-Hsuan},
     TITLE = {On determining and breaking the gauge class in inverse
              problems for reaction-diffusion equations},
   JOURNAL = {Forum Math. Sigma},
  FJOURNAL = {Forum of Mathematics. Sigma},
    VOLUME = {12},
      YEAR = {2024},
     PAGES = {e25, 42 pp.},
      ISSN = {2050-5094},
   MRCLASS = {35R30 (35K57)},
  MRNUMBER = {4710715},
       DOI = {10.1017/fms.2024.18},
       URL = {https://doi.org/10.1017/fms.2024.18},
}

@article{homberg2019uniqueness,
  title={Uniqueness for an inverse problem for a nonlinear parabolic system with an integral term by one-point Dirichlet data},
  author={H{\"o}mberg, Dietmar and Lu, Shuai and Yamamoto, Masahiro},
  journal={J. Differ. Equations},
  volume={266},
  number={11},
  pages={7525--7544},
  year={2019},
  publisher={Elsevier}
}

@article{choy2026simultaneous,
  title={Simultaneous stable determination of quasilinear terms for parabolic equations},
  author={Choy, Jason and Kian, Yavar},
  journal={Nonlinear Anal. Real World Appl.},
  volume={87},
  pages={104442},
  year={2026},
  publisher={Elsevier}
}

@article{chadam1994diffusion,
  title={A diffusion equation with localized chemical reactions},
  author={Chadam, John M and Yin, Hong-Ming},
  journal = {Proc. Edinb. Math. Soc.},
  fjournal={Proceedings of the Edinburgh Mathematical Society},
  volume={37},
  number={1},
  pages={101--118},
  year={1994},
  publisher={Cambridge University Press}
}

@article{Yoshizawa_1970, title={Population growth process described by a semilinear parabolic equation}, volume={7}, ISSN={0025-5564}, DOI={https://doi.org/10.1016/0025-5564(70)90129-X}, number={3}, journal={Math. Biosci.}, author={Yoshizawa, Shuji}, year={1970}, pages={291–303} }

@article{egger2005global,
  title={Global uniqueness and {H}{\"o}lder stability for recovering a nonlinear source term in a parabolic equation},
  author={Egger, Herbert and Engl, Heinz W and Klibanov, Michael V},
  journal={Inverse problems},
  volume={21},
  number={1},
  pages={271--290},
  year={2005}
}

@article{feizmohammadi2022inverse,
  title={An inverse problem for a quasilinear convection--diffusion equation},
  author={Feizmohammadi, Ali and Kian, Yavar and Uhlmann, Gunther},
  journal={Nonlin. Anal.},
  volume={222},
  pages={112921},
  year={2022},
  publisher={Elsevier}
}

@article{Kian_Uhlmann_2023, title={Recovery of Nonlinear Terms for Reaction Diffusion Equations from Boundary Measurements}, volume={247}, ISSN={1432-0673}, DOI={10.1007/s00205-022-01831-y},  
 number={1}, journal={Arch. Ration. Mech. Anal.}, 
 author={Kian, Yavar and Uhlmann, Gunther}, 
 year={2023}, 
 pages={6, 20 pp.}, 
 }

@article{isakov2001uniqueness,
  title={Uniqueness of recovery of some systems of semilinear partial differential equations},
  author={Isakov, Victor},
  journal={Inverse Problems},
  volume={17},
  number={4},
  pages={607--618},
  year={2001}
}

@misc{kian2024determination,
  title={Determination and reconstruction of a semilinear term from point measurements},
  author={Kian, Yavar and Liu, Hongyu and Wang, Li-Li and Zheng, Guang-Hui},
  howpublished ={Preprint, arXiv:2411.09922},
  year={2024}
}
\end{document}